\definecolor{blue}{rgb}{0.1,0.2,0.5}
\definecolor{brown}{rgb}{0.6,0.6,0.2}
\crefname{claim}{Claim}{Claims}
\Crefname{claim}{Claim}{Claims}
\theoremstyle{plain}
\newtheorem{theorem}{Theorem}
\newcommand{\newtheoremwithcrefformat}[2]{%
  \newtheorem{#1}[theorem]{#2}%
  \crefformat{#1}{##2\MakeUppercase#1~##1##3}%
  \Crefformat{#1}{##2\MakeUppercase#1~##1##3}%
}
\newcommand{\newseptheoremwithcrefformat}[2]{%
  \newtheorem{#1}{#2}%
  \crefformat{#1}{##2\MakeUppercase#1~##1##3}%
  \Crefformat{#1}{##2\MakeUppercase#1~##1##3}%
}
\theoremstyle{nonumberplain}
\newtheorem{proof}{Proof}
\newtheorem{claimproof}{Proof of Claim}
\def\cqedsymbol{\ifmmode$\lrcorner$\else{\unskip\nobreak\hfil
\penalty50\hskip1em\null\nobreak\hfil$\lrcorner$
\parfillskip=0pt\finalhyphendemerits=0\endgraf}\fi}
\tikzset{
    position/.style args={#1:#2 from #3}{
        at=(#3.#1), anchor=#1+180, shift=(#1:#2)
    }
}
\newcommand{\Oh}{\mathcal{O}}
\let\originalleft\left
\let\originalright\right
\renewcommand{\left}{\mathopen{}\mathclose\bgroup\originalleft}
\renewcommand{\right}{\aftergroup\egroup\originalright}
\renewcommand{\leq}{\leqslant}
\renewcommand{\geq}{\geqslant}
\renewcommand{\tilde}{\widetilde}
\newcommand{\comp}{\mathsf{CC}}
\newcommand\calD{\ensuremath{\mathcal{D}}}
\newcommand\tB{\tilde{B}}
\newcommand\card[1]{|#1|}
\begin{document}

\author[1]{Jakub Gajarsk{\' y}}
\author[1,2]{Lars Jaffke}
\author[3]{Paloma T.\ Lima}
\author[1]{Jana Novotn{\' a}}
\author[1]{Marcin Pilipczuk}
\author[1,5]{Pawe{\l}~Rz{\k a}\.{z}ewski}
\author[1,4]{U{\' e}verton S.\ Souza}
\affil[1]{University of Warsaw, Poland}
\affil[2]{University of Bergen, Norway}
\affil[3]{IT University of Copenhagen, Denmark}
\affil[4]{Universidade Federal Fluminense, Niter\'{o}i, Brazil}
\affil[5]{Warsaw University of Technology, Poland}

\title{Taming graphs with no large creatures and skinny ladders%
\thanks{This research is a part of a project that has received funding from the European Research Council (ERC) under the European Union's Horizon 2020 research and innovation programme Grant Agreement 714704.}
}
\date{}
\maketitle

\begin{textblock}{20}(0,12.75)
\includegraphics[width=40px]{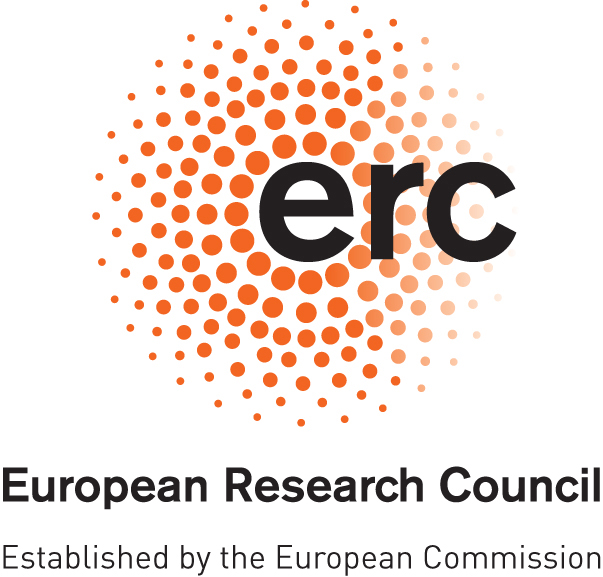}%
\end{textblock}
\begin{textblock}{20}(0,13.59)
\includegraphics[width=40px]{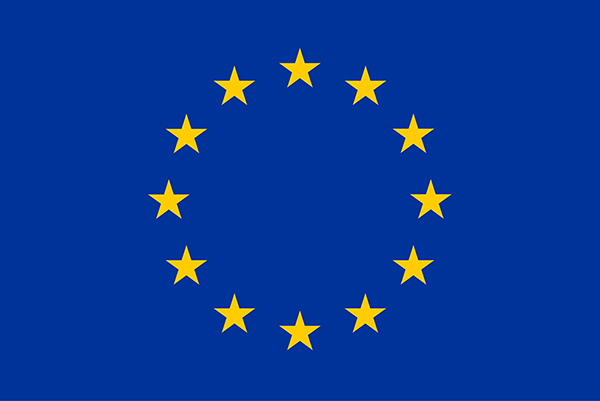}%
\end{textblock}

\begin{abstract}
We confirm a conjecture of Gartland and Lokshtanov [arXiv:2007.08761]: 
if for a hereditary graph class $\mathcal{G}$ there exists a constant $k$
such that no member of $\mathcal{G}$ contains a $k$-creature as an induced subgraph
or a $k$-skinny-ladder as an induced minor, then there exists a polynomial $p$
such that every $G \in \mathcal{G}$ contains at most $p(|V(G)|)$ minimal separators.
By a result of Fomin, Todinca, and Villanger~[SIAM J. Comput. 2015] the latter
entails the existence of polynomial-time algorithms for \textsc{Maximum Weight Independent Set},
\textsc{Feedback Vertex Set} and many other problems, when restricted to an input
graph from $\mathcal{G}$.
Furthermore, as shown by Gartland and Lokshtanov, our result implies a full dichotomy
of hereditary graph classes defined by a finite set of forbidden induced subgraphs
into tame (admitting a polynomial bound of the number of minimal separators)
and feral (containing infinitely many graphs with exponential number of minimal separators). 
\end{abstract}

\section{Introduction}
For a graph $G$, a set $S \subseteq V(G)$ is a \emph{minimal separator}
if there are at least two connected components $A,B$ of $G-S$ with $N(A) = N(B) = S$
(so that $S$ is an inclusion-wise minimal set that separates a vertex of $A$ from a vertex of $B$).
Around the year 2000, Bouchitt\'{e} and Todinca presented a theory of minimal separators and
related objects called \emph{potential maximal cliques} and showed their usefulness for providing
efficient algorithms~\cite{BouchitteT01}.
In particular, the \textsc{Maximum Weight Independent Set} problem (given a vertex-weighted graph,
find a subset of pairwise nonadjacent vertices of maximum total weight)
can be solved in time bounded polynomially in the size and the number of minimal separators in
the graph. 
This result has been generalized by Fomin, Todinca, and Villanger to a large
range of problems that can be defined as finding an induced subgraph of constant treewidth
with some \textsf{CMSO}$_2$-expressible property~\cite{FominTV15}; this includes, for example, \textsc{Longest Induced Path} or \textsc{Max Induced Forest}, which is by complementation equivalent to \textsc{Feedback Vertex Set}. 

When do these metaalgorithmic results give efficient algorithms?
In other words, which restrictions on graphs guarantee a small number of minimal separators?
On one hand, it is easy to see that an $n$-vertex chordal graph has $\Oh(n)$ minimal separators.
On the other hand, consider the following two negative examples. 
For $k \geq 3$, the \emph{$(k,1)$-prism} consists of two $k$-vertex cliques with vertex sets $X = \{x_1,\ldots,x_k\}$
and $Y = \{y_1,\ldots,y_k\}$ and a perfect matching $\{x_iy_i~|~i \in [k]\}$. 
It is easy to see that the $(k,1)$-prism has $2^k-2$ minimal separators: any choice
of one endpoint of each edge $x_iy_i$ gives a minimal separator, except for the choices $X$ and $Y$.
The \emph{$(k,3)$-theta} consists of $k$ independent edges $\{x_iy_i~|~i \in [k]\}$,
a vertex $x$ adjacent to all vertices $x_i$ and a vertex $y$ adjacent to all vertices
$y_i$ (the intuition behind the notation is that the graph consists of $k$ paths of length 3, joining $x$ and $y$). Again, any choice of one endpoint of each edge $x_iy_i$ gives a minimal separator.
Thus, both the $(k,1)$-prism and the $(k,3)$-theta have an exponential (in the number of vertices)
number of minimal separators. 

In 2019, Milani\v{c} and Piva\v{c} initiated a systematic study of the question which graph
classes admit a small bound on the number of minimal separators in its members~\cite{MilanicP19,MilanicP21}. 
A graph class $\mathcal{G}$ is \emph{tame} if there exists a polynomial $p_{\mathcal{G}}$
such that for every $G \in \mathcal{G}$ the number of minimal separators of $G$
is bounded by $p_{\mathcal{G}}(|V(G)|)$. Clearly, if $\mathcal{G}$ is tame, 
then \textsc{Maximum Weight Independent Set} and all problems captured by the formalism
of~\cite{FominTV15} are solvable in polynomial time when the input graph comes from $\mathcal{G}$.
On the opposite side of the spectrum, $\mathcal{G}$ is \emph{feral} if there exists $c > 1$
such that for infinitely many graphs $G \in \mathcal{G}$ it holds that $G$ has at least $c^{|V(G)|}$
minimal separators. Following the previous examples, the class of chordal graphs is tame
while the class of all $(k,1)$-prisms and/or all $(k,3)$-thetas (over all $k$) is feral.
Milani\v{c} and Piva\v{c} provided a full tame/feral dichotomy for hereditary graph classes (i.e., closed under vertex deletion) defined by minimal forbidded induced subgraphs on at most $4$ vertices~\cite{MilanicP19,MilanicP21}.

A subsequent work of Abrishami, Chudnovsky, Dibek, Thomass\'{e}, Trotignon, and Vuskovi\v{c}~\cite{AbrishamiCDTTV22} indicated that the main line of distinction between tame and feral graph classes
should lie around the notion of a \emph{$k$-creature}.
A $k$-creature in a graph $G$ is a tuple $(A,B,X,Y)$ of pairwise disjoint nonempty vertex sets 
such that (i) $A$ and $B$ are connected, (ii) $A$ is anti-adjacent to $Y \cup B$ and $B$ is anti-adjacent to $A \cup X$, (iii) every $x \in X$ has a neighbor in $A$ and every $y \in Y$ has a neighbor in $B$;
(iv) $|X|=|Y|=k$ and $X$ and $Y$ can be enumerated as $X = \{x_1,\ldots,x_k\}$, $Y = \{y_1,\ldots,y_k\}$ such that $x_iy_j \in E(G)$ if and only if $i=j$. 
We say that $G$ is \emph{$k$-creature-free} if $G$ does not contain a $k$-creature as an induced subgraph.
Similarly as in the examples of the $(k,1)$-prism and the $(k,3)$-theta, any choice of one endpoint
of every edge $x_iy_i$ gives a minimal separator in the subgraph induced by the creature
(which, in turn, can be easily lifted to a minimal separator in $G$).
Hence, if $G$ contains a $k$-creature as an induced subgraph, it contains at least $2^k$ minimal separators.
In fact, the notion of a $k$-creature is a common generalization of the examples of the $(k,1)$-prism
and the $(k,3)$-theta.
Indeed, the $(k,3)$-theta contains a $k$-creature with $A = \{x\}$ and $B = \{y\}$
while the $(k,1)$-prism contains a $(k-2)$-creature with $A = \{x_{k-1}\}$, $B = \{y_k\}$, 
$X= \{x_1,\ldots,x_{k-2}\}$, and $Y = \{y_1,\ldots,y_{k-2}\}$. 
In particular, Abrishami et al. conjectured that if for a hereditary graph class $\mathcal{G}$
there exists $k$ such that no $G \in \mathcal{G}$ contains a $k$-creature as an induced subgraph,
then $\mathcal{G}$ is tame.
(Observe that a presence of arbitrarily large creatures in a hereditary graph class
 does not immediately imply that the graph class is feral, as the sets $A$ and $B$ can be of
 superpolynomial size in $k$.)

A counterexample to the conjecture of~\cite{AbrishamiCDTTV22}
has been provided by Gartland and Lokshtanov
in the form of a \emph{$k$-twisted ladder}~\cite{GL20}. 
They observed that, despite the fact that the conjecture of~\cite{AbrishamiCDTTV22} is false,
every example they can construct ``looks like a twisted ladder'', which indicates
that the tame/feral boundary
for hereditary graph classes should not be far from the said conjecture. 
To support this intuition, they introduced the notion of a \emph{$k$-skinny ladder}
(a graph consisting of two induced antiadjacent paths $P = (p_1,\ldots,p_k)$, $Q=(q_1,\ldots,q_k)$,
and independent set $R = (r_1,\ldots,r_k)$, and edges $\{p_ir_i,q_ir_i~|~i\in[k]\}$),
  noted that a $k$-skinny-ladder is an induced minor of every counterexample they constructed, 
and proved the following.
\begin{theorem}\label{thm:GL20}
For every $k$ there exists a constant $c_k$
such that if a graph $G$ is $k$-creature-free
and does not contain a $k$-skinny-ladder
as an induced minor, then the number of minimal separators in $G$
is bounded by $c_k |V(G)|^{c_k \log |V(G)|}$, that is, quasi-polynomially in 
the size of $G$.
\end{theorem}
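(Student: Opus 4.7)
The plan is to establish the bound by a divide-and-conquer recursion on $n = |V(G)|$. Let $f_k(n)$ denote the maximum number of minimal separators in an $n$-vertex graph $G$ satisfying the hypotheses. I aim for a recurrence of the shape
\[
f_k(n) \;\leq\; n^{\Oh(1)}\cdot\bigl(f_k(\lfloor 2n/3\rfloor)\bigr)^{2},
\]
which, once unfolded, yields $f_k(n) \leq n^{c_k \log n}$ for a suitable constant $c_k$ depending on $k$. Each level of the recursion charges a polynomial cost for guessing a piece of ``pivot'' information about a separator, while the recursion depth is $\Oh(\log n)$, producing exactly the claimed quasi-polynomial form.

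To produce such a recurrence I would prove a \emph{balanced branching lemma}: for every $G$ satisfying the hypotheses there is a collection $\mathcal{W}$ of triples $(W, H_1, H_2)$, with $|\mathcal{W}| = n^{\Oh(1)}$, such that every minimal separator $S$ of $G$ is determined by (i) a choice of $(W, H_1, H_2) \in \mathcal{W}$, (ii) the intersection $S \cap W$, and (iii) a minimal separator in each of $H_1$ and $H_2$, where $H_1, H_2$ are induced subgraphs of $G$ with $|V(H_i)| \leq 2n/3$. Since $k$-creature-freeness and the absence of a $k$-skinny-ladder induced minor are preserved by taking induced subgraphs, the recursion closes on instances satisfying the same hypotheses, and iterating the lemma produces the target bound.

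The heart of the proof, and where I expect the main obstacle, is to show that such a balanced decomposition always exists. I would argue by contradiction: assume that $G$ admits no balanced pivoting, witnessed by a minimal separator $S_0$ such that no small ``branching set'' $W$ splits the problem evenly between the two full sides of $S_0$. Iterating the imbalance witness should let one build a long nested chain of minimal separators $S_0 \subsetneq S_1 \subsetneq \cdots \subsetneq S_\ell$, together with short paths realizing the transitions, where $\ell$ can be chosen as an arbitrarily large polynomial in $k$. The delicate step is then a Ramsey-style clean-up of the adjacencies between the transition paths: after restricting to a long enough subchain one must show that the paths either exhibit $k$ vertex-disjoint matching edges with the correct ``creature'' structure, giving the forbidden $k$-creature as an induced subgraph, or, after contracting each transition path into a single vertex, realize the ``skinny'' ladder, i.e.\ two induced anti-adjacent rails joined by $k$ degree-$2$ rungs. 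The technical crux is to rule out the intermediate adjacency patterns that neither spoil the induced-minor model of a skinny-ladder nor form a creature on their own, and this is precisely where the simultaneous use of both forbidden substructures is essential: $k$-creature-freeness forbids ``spurious'' chords between the rails that would otherwise leave open a third possibility, and the ladder-freeness forbids the contracted chain itself. Once this structural dichotomy is in place, the branching lemma follows, and the recurrence closes to give the quasi-polynomial bound.
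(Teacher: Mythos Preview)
First, note that the paper does \emph{not} prove \cref{thm:GL20}: it is quoted as a prior result of Gartland and Lokshtanov~\cite{GL20} and serves only as background for the paper's own improvement (\cref{thm:conj}). So there is no ``paper's own proof'' to compare against here; the relevant proof lives in~\cite{GL20}, and from the tools the present paper imports (the family $S_G^v$ and the invariant $\zeta_G(S)$, cf.\ \cref{lem:GL:Sv} and \cref{lem:GL20}) one can infer that the original argument proceeds by controlling $\zeta_G(S)$ via the skinny-ladder exclusion and then recursing, not by a balanced-separator divide-and-conquer.

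More importantly, your recurrence is wrong as stated. A relation of the form
\[
f_k(n)\;\le\; n^{c}\cdot \bigl(f_k(\lfloor 2n/3\rfloor)\bigr)^{2}
\]
does \emph{not} unfold to $n^{\Oh(\log n)}$. Taking logarithms gives $g(n)\le c\log n + 2\,g(2n/3)$ with $g=\log f_k$; iterating for $t=\Theta(\log n)$ levels yields
\[
g(n)\;\le\; \sum_{i=0}^{t-1} 2^{i}\, c\log\!\bigl((2/3)^{i}n\bigr)\;=\;\Theta\!\bigl(2^{t}\log n\bigr)\;=\;\Theta\!\bigl(n^{\log_{3/2}2}\log n\bigr),
\]
so $f_k(n)=n^{\,n^{\Theta(1)}}$, which is far worse than exponential. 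The squaring is fatal: to land at a quasi-polynomial bound via this style of recursion you need a \emph{single} recursive call, i.e.\ $f_k(n)\le n^{\Oh(1)}\cdot f_k(\alpha n)$ for some $\alpha<1$ (or, equivalently, the separator must be recoverable from a separator of \emph{one} smaller instance plus polynomial side information). Your scheme, which reconstructs $S$ from a minimal separator in \emph{each} of $H_1$ and $H_2$, multiplies the counts and cannot give the target bound regardless of how clean the branching lemma is.

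Beyond the recurrence, the branching lemma itself is only a wish list: you have not explained why failure of balanced pivoting forces a long nested chain of minimal separators, nor why the Ramsey clean-up on the transition paths terminates in exactly the creature/skinny-ladder dichotomy rather than some third pattern. These are the genuinely hard structural steps, and the sketch does not engage with them.
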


Gartland and Lokshtanov conjectured that this dependency should be in fact polynomial.
Our main result of this paper is a proof of this conjecture.
\begin{theorem}\label{thm:conj}
For every $k \in \mathbb{N}$ there exists a polynomial $\mathrm{q}$ of degree
$\Oh(k^3 \cdot (8k^2)^{k+2})$ such that every graph $G$
that is $k$-creature-free and does not contain $k$-skinny-ladder as an induced minor
contains at most $\mathrm{q}(|V(G)|)$ minimal separators.
\end{theorem}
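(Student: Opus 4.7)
The strategy is induction on $n = |V(G)|$, combined with the classical vertex-charging reduction: for any $v$,
\[
|\mathrm{MinSep}(G)| \;\leq\; |\mathrm{MinSep}(G-v)| \;+\; |\{S \in \mathrm{MinSep}(G) : v \in S\}|
\]
(modulo the standard correction for separators of $G-v$ that fail to separate $G$). Iterating this inequality over all $n$ vertices, it suffices to prove the following \emph{local} lemma: in any $G$ satisfying the hypothesis of the theorem, every vertex lies in at most $n^{c_k}$ minimal separators, where $c_k = O(k^3(8k^2)^{k+1})$. Summing the local bound over the vertices and closing the induction yields $|\mathrm{MinSep}(G)| \leq n^{c_k+1}$, of the required degree; in particular we can afford to lose one unit in the exponent in each inductive step.

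To prove the local lemma, I would argue by contrapositive. Fix $v$ and assume that strictly more than $n^{c_k}$ minimal separators of $G$ contain $v$. For each such $S$, pick canonically a pair $(A_S, B_S)$ of full components of $G-S$ with $N(A_S) = N(B_S) = S$, and further pick neighbors $a_S \in A_S \cap N(v)$ and $b_S \in B_S \cap N(v)$, which exist because $v \in S$. A first application of the pigeonhole principle, on the pair $(a_S, b_S) \in V(G)^2$, produces a subfamily $\mathcal{F}$ of size at least $n^{c_k - 2}$ in which $a_S = a$ and $b_S = b$ are common; every member of $\mathcal{F}$ is then a minimal $a$--$b$-separator that also contains $v$.

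Exploiting the lattice structure of minimal $a$--$b$-separators under the parallelism/crossing relation, together with a further sequence of pigeonhole steps on \emph{local signatures} (recording, for a small set of auxiliary vertices, which side of $S$ they occupy), I would then extract from $\mathcal{F}$ a long chain $S_1 \preceq S_2 \preceq \cdots \preceq S_N$ of pairwise nested separators, all containing $v$, such that consecutive members share a uniform intersection pattern. A last round of Ramsey-style cleaning is then applied to the \emph{intermediate regions} sandwiched between consecutive separators of the chain, producing $k$ pairwise internally disjoint induced paths joining a common "left anchor" near $a$ to a common "right anchor" near $b$ and crossing the chain in order. Contracting the nested pieces of each such path gives an induced-minor model of the $k$-skinny-ladder; alternatively, failure of the cleaning step directly exhibits an induced $k$-creature, in either case contradicting the hypothesis on $G$.

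The principal obstacle is the last step: extracting $k$ pairwise internally disjoint \emph{induced} paths that genuinely realise the ribs of the skinny-ladder, as opposed to mere connectivity between successive layers of the chain. Spurious induced edges between different layers, and unwanted interactions between candidate ribs, must be pruned by several rounds of Ramsey-style cleaning; each round costs a polynomial factor of roughly $8k^2$ in the exponent, and $k+O(1)$ such rounds are needed to reduce the number of colors all the way down to $k$ clean ribs. This is what forces the degree $c_k = O(k^3(8k^2)^{k+2})$; the factor $k^3$ is absorbed by the cost of fixing the local signatures and the pair $(a,b)$, while the $(8k^2)^{k+2}$ is the inevitable cost of the iterated cleaning.
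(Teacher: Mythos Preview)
Your plan has real gaps at the decisive steps. A minor issue first: the deletion inequality you open with is false in general (a minimal separator $S\not\ni v$ of $G$ need not remain one in $G-v$, since $v$ may be essential for connectivity or fullness of one of the two full sides), and there is no ``standard correction'' for this direction. Fortunately you do not need it: the local lemma alone already gives $|\mathrm{MinSep}(G)|\le\sum_v|\{S:v\in S\}|\le n^{c_k+1}$, since every nonempty separator is counted at least once.

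The substantive gap is in steps 5--6. Having many minimal $a$--$b$-separators through $v$ does not by itself yield a long chain: the ``closer to $a$'' poset can be extremely wide (in a $(k,3)$-theta it is the Boolean lattice $\{0,1\}^k$), and nothing in your outline shows that large width already forces a $k$-creature. Even granting a long chain, the object you say you will build, ``$k$ pairwise internally disjoint induced paths joining a common left anchor to a common right anchor'', is a theta, not a $k$-skinny-ladder; the latter consists of \emph{two} long anti-adjacent rails joined by $k$ short rungs, a geometry that is not at all aligned with a chain of nested separators all passing through a single fixed vertex $v$. Finally, the exponent $(8k^2)^{k+2}$ is visibly read off the target statement rather than produced by any concrete Ramsey step you describe. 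For comparison, the paper avoids this entire line: it works with the invariant $\zeta_G(S)$, quotes a Gartland--Lokshtanov lemma that in a $k$-creature-free graph $\zeta_G(S)>(8k^2)^{k+2}$ forces a $k$-skinny-ladder induced minor (this is where that constant actually comes from), and then proves by induction on $L$, not on $n$, that at most $n^{\Oh(k^3 L)}$ minimal separators satisfy $\zeta_G(S)\le L$, by encoding each such $S$ via $\Oh(k^2)$ carefully chosen vertices, their traces $N(z)\cap S$ (at most $n^{k+1}$ choices each, again by a lemma from~\cite{GL20}), and a residual separator in a smaller graph whose $\zeta$ has strictly decreased.
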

That is, every hereditary graph class $\mathcal{G}$ for which there exists $k$
such that no member of $G$ contains a $k$-creature nor $k$-skinny-ladder as an induced minor,
is tame.

As proven in~\cite{GL20}, \cref{thm:conj} implies a dichotomy result into tame and feral
graph classes for all hereditary graph classes defined by a finite list of forbidden induced subgraphs.
(For the exact definitions of graphs in the statement, we refer to~\cite{GL20}.)
\begin{theorem}\label{thm:dich}
Let $\mathcal{G}$ be a graph class defined by a finite number of forbidden induced subgraphs.
If there exists a natural number $k$ such that $\mathcal{G}$ does not contain all
$k$-theta, $k$-prism, $k$-pyramid, $k$-ladder-theta, $k$-ladder-prism, $k$-claw, and $k$-paw graphs,
  then $\mathcal{G}$ is tame. Otherwise $\mathcal{G}$ is feral.
\end{theorem}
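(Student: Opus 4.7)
The plan is to derive \cref{thm:dich} by combining \cref{thm:conj} with the structural case analysis from Gartland and Lokshtanov~\cite{GL20}, treating the two directions of the dichotomy separately.

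For the \emph{tame} direction, suppose $\mathcal{G}$ is defined by a finite family $\mathcal{F}$ of forbidden induced subgraphs and that the stated hypothesis holds: for some $k$, at least one of the graphs/families $k$-theta, $k$-prism, $k$-pyramid, $k$-ladder-theta, $k$-ladder-prism, $k$-claw, $k$-paw is not in $\mathcal{G}$. Equivalently, each such missing graph must contain some $H \in \mathcal{F}$ as an induced subgraph. The main structural step, established in~\cite{GL20}, is a Ramsey-type reduction: there exists $k^\star = k^\star(k, \mathcal{F})$ such that if a graph contains a $k^\star$-creature as an induced subgraph or a $k^\star$-skinny-ladder as an induced minor, then it must also contain one of the seven canonical graphs at level $k$ as an induced subgraph. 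Applied to our setting, this forces every $G \in \mathcal{G}$ to be $k^\star$-creature-free and free of $k^\star$-skinny-ladder induced minors, whence \cref{thm:conj} delivers a polynomial bound on the number of minimal separators of $G$, so $\mathcal{G}$ is tame.

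For the \emph{feral} direction, one lies in the negation of the hypothesis: for every $k$, all of the seven families at level $k$ lie in $\mathcal{G}$. An elementary argument, already sketched in the introduction for the $(k,1)$-prism and the $(k,3)$-theta, shows that each of these graphs has $\Oh(k)$ vertices and at least $2^{\Omega(k)}$ minimal separators, obtained by choosing one endpoint of each ``rung'' edge. Hence $\mathcal{G}$ contains an infinite family of graphs whose minimal-separator count grows exponentially in the number of vertices, certifying ferality.

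The heart of the argument, and the main obstacle, is the structural reduction invoked in the tame direction. Intuitively, one must argue that when the ambient class is constrained by a \emph{finite} set of forbidden induced subgraphs, all the shapes a large creature or a large skinny-ladder induced minor could take must degenerate to one of the seven canonical families: pigeonhole arguments pin down how the shoulders $A,B$ of a creature attach to the terminals $X,Y$, how long the connecting paths are, and how branch sets of an induced-minor realisation of a skinny ladder can be routed. Each resulting case exhibits a large induced copy of one of the seven canonical graphs, so the exclusion hypothesis bounds the size of any witnessed creature or skinny-ladder minor. Once this reduction is in place, \cref{thm:conj} upgrades the quasi-polynomial bound of \cref{thm:GL20} to a polynomial bound, closing the dichotomy.
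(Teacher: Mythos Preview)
Your high-level strategy---combine \cref{thm:conj} with the structural analysis of~\cite{GL20}---is exactly what the paper does (indeed, the paper gives no proof beyond citing~\cite{GL20}). However, you have misparsed the hypothesis, and this creates a real gap.

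The phrase ``$\mathcal{G}$ does not contain all $k$-theta, $k$-prism, \ldots, $k$-paw graphs'' must be read as ``$\mathcal{G}$ contains \emph{none} of these seven graphs'', not ``$\mathcal{G}$ misses at least one of them''. Under your reading the tame direction is simply false: take $\mathcal{F}=\{k\text{-theta}\}$ for one fixed $k$; then $\mathcal{G}$ misses the $k$-theta but still contains every $(m,1)$-prism, hence is feral, not tame. Your own Ramsey step exposes the mismatch: a large creature (or large skinny-ladder induced minor) forces \emph{one} of the seven canonical graphs to appear, not a specific one, so you only obtain a contradiction when all seven are excluded from $\mathcal{G}$. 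Dually, the feral direction should start from the weaker negation ``for every $k$, at least one of the seven graphs lies in $\mathcal{G}$''; a pigeonhole over the seven families then gives an infinite subfamily witnessing exponentially many minimal separators. With this corrected reading, the rest of your sketch goes through and matches the paper's intended derivation.
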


Our proof builds upon the proof of \cref{thm:GL20} of~\cite{GL20}
and provides a new way of analysing one of the core invariants. 
For a graph $G$ and a set $S$, define
\[
\zeta_G(S) = \max \{ |I| ~:~ I \subseteq S \text{ is an independent set and for every }  v \notin S \text{ we have } |N(v) \cap I| \leq 1 \}.
\]
That is, we want a set $I \subseteq S$ of maximum possible size that is not only independent,
but no vertex outside $S$ is adjacent to more than one vertex of $I$.  
In the proof of \cref{thm:GL20} of~\cite{GL20}, an important step is to prove that 
a minimal separator $S$ with huge $\zeta_G(S)$ gives rise to a large skinny ladder
as an induced minor. 
Our main technical contribution is an improved way of analysing minimal separators $S$ with small $\zeta_G(S)$.

\begin{theorem}\label{thm:main}
For every $k,L \in \mathbb{N}$ there exists a polynomial $\mathrm{p}$ of degree $\Oh(k^3 \cdot L)$, such that the following holds.
For every $k$-creature-free graph $G$, the number of minimal separators $S$ satisfying $\zeta_G(S) \leq L$ is at most $\mathrm{p}(|V(G)|)$.
\end{theorem}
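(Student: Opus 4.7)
\emph{Plan.} The plan is to attach to each minimal separator $S$ with $\zeta_G(S) \leq L$ a compact \emph{fingerprint} of size at most $3L$ in $V(G)$, and to show — using the $k$-creature-free property — that each fingerprint determines $S$ up to polynomial branching of degree $O(k^3)$ per witness vertex. Iterating the branching across the at most $L$ witness vertices, combined with $n^{O(L)}$ choices of fingerprint, gives the claimed $n^{O(k^3 L)}$ bound.

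\emph{Fingerprint construction.} Fix a minimal separator $S$ with full components $A, B$ (so that $N(A) = N(B) = S$) and a maximum $\zeta$-witness $I = \{u_1, \ldots, u_\ell\} \subseteq S$, $\ell \leq L$. The defining property of $\zeta$ prevents any vertex outside $S$ from being adjacent to two vertices of $I$. Since $u_i \in S = N(A) = N(B)$, one can therefore choose pairwise distinct private neighbors $a_i \in N(u_i) \cap A$ and $b_i \in N(u_i) \cap B$ such that each $a_i$ and each $b_i$ is anti-adjacent to every $u_j$ with $j \neq i$. The fingerprint $(I, (a_i), (b_i))$ contains at most $3\ell$ vertices, and there are at most $n^{3L}$ possible fingerprints.

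\emph{Bounded branching per witness vertex.} I would prove, by induction on $\ell$, that the number of minimal separators compatible with a given $\ell$-fingerprint is at most $n^{c k^3 \ell}$. For the inductive step, after fixing a fingerprint of length $\ell - 1$ and appending a new entry $(u_\ell, a_\ell, b_\ell)$, one argues that only $n^{O(k^3)}$ extensions of $S$ remain possible. The proof is by contradiction: given too many separators sharing the extended fingerprint, a multi-axis Ramsey-style pigeonhole on the adjacency patterns of the ``disagreement set'' — classified along three axes (attachment to $I$, $A$-side behavior, and $B$-side behavior) — yields $k$ vertices that, together with suitably chosen $a_j, b_j$ from the fingerprint, instantiate a $k$-creature, contradicting $k$-creature-freeness. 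The three orthogonal axes produce the $k^3$ exponent.

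\emph{Main obstacle.} The technical crux is the creature extraction, which must simultaneously witness all four $k$-creature axioms. Connectivity of the creature's $A'$ and $B'$ can be threaded through the private neighbors $a_i, b_i$ supplied by the fingerprint, since they are both in the full connected components $A, B$ of $G - S$. The two mutual anti-adjacency conditions follow from the $\zeta$-condition on $I$ together with careful choice of $X, Y$ inside the disagreement set. The hardest axiom is the induced perfect matching on $X \cup Y$: after the initial pigeonholes, spurious cross-edges between candidate $X$- and $Y$-vertices may remain, and an extra Ramsey reduction on the induced bipartite graph between them is needed to force a matching. Controlling this last reduction — and verifying that all three nested pigeonholes close within an $n^{O(k^3)}$ bound per appended witness vertex — is where the bulk of the technical work will lie.
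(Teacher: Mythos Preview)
Your plan diverges substantially from the paper's proof and, as stated, has a genuine gap at the inductive step.

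The paper does not use fingerprints. It inducts directly on $L$ via a \emph{peeling} argument: given $S$ with full sides $A,B$, it picks a minimal connected $\tilde B\subseteq B$ dominating $S$, a leaf $u$ of $\tilde B$, and a private neighbor $v\in S$ of $u$, then partitions $S$ into pieces $S_u,S_v,S_A,S_B$ and a remainder. Three explicit creature constructions (Claims~1--3) show that a set $Z$ of size $<k^2$ (consisting of $u$, a minimal $Z_A\subseteq N(v)\setminus S\setminus B$ covering $S_A$, and minimal sets $Z_D\subseteq N(v)\cap B$ covering $S_B$) suffices so that $Q=\bigcup_{z\in Z}(N(z)\cap S)$ already contains $S_u\cup S_A\cup S_B$. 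A separate set $R=N(v)\cap S''$ catches $S_v$. Each piece $N(z)\cap S$ and $R$ is an element of some $S_G^w$ with $w\notin S$, so by the Gartland--Lokshtanov lemma $|S_G^w|\le n^{k+1}$ there are only $n^{O(k^3)}$ choices for $Q\cup R$. The crux (Claim~4) is that the residual $S_0=S\setminus(Q\cup R)$ is a minimal separator in $G_0=G-(Q\cup R)$ with $\zeta_{G_0}(S_0)<\zeta_G(S)$, because $I_0\cup\{v\}$ is again a valid witness set in $G$; this drives the induction.

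Your proposal has no analogue of this $\zeta$-decrement. The induction you describe is ill-posed: if the hypothesis is that an $(\ell-1)$-fingerprint is compatible with at most $n^{ck^3(\ell-1)}$ separators, then extending to an $\ell$-fingerprint can only \emph{shrink} the compatible family, so there is nothing to multiply by $n^{O(k^3)}$. What is actually needed is a mechanism that strips one witness and produces a bona fide minimal separator with strictly smaller $\zeta$ in some modified graph; your plan does not supply one. Moreover, your creature-extraction sketch is not a construction. You want to deduce a $k$-creature in $G$ from the existence of ``too many separators sharing the extended fingerprint,'' but a creature lives in a single copy of $G$, and your fingerprint vertices $a_i,b_i$ are each adjacent to exactly one $u_i$ (and to no other $u_j$), so they cannot by themselves form the connected sides of a creature whose matching sits outside $I$. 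Producing the required connected sets and the semi-induced matching is exactly what the paper's structural choices ($\tilde B\setminus\{u\}$, the pair $(u,v)$, the paths $P_D$ into components of $A\setminus N(v)$) accomplish, and a ``three-axis Ramsey pigeonhole on the disagreement set'' does not substitute for that analysis. Finally, you never invoke the bound $|S_G^v|\le n^{k+1}$, which is the actual engine behind the polynomial count in the paper; without it, there is no way to convert ``$Z$ has size $O(k^2)$'' into ``$Q$ has $n^{O(k^3)}$ possible values.''
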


After brief preliminaries in \cref{sec:prelims}, we prove \cref{thm:main} in \cref{sec:main}. 
We show how \cref{thm:main} implies \cref{thm:conj} (with the help of
some tools from~\cite{GL20}) in \cref{sec:wrap-up}.

\section{Preliminaries}\label{sec:prelims}
Let $G$ be a graph, $v$ be a vertex of $G$, and $S$ be a subset of vertices.
By $N_G(v)$ we denote the set of neighbors of $v$. Similarly, by $N_G(S)$ we denote the set $\bigcup_{x \in S} N_G(x) \setminus S$. If the graph $G$ is clear from the context, we simply write $N(v)$ and $N(S)$.

For sets $A,B,C$, whenever we write $A \setminus B \setminus C$, the set difference operation associates from the left, meaning that $A \setminus B \setminus C$ is equivalent to $(A \setminus B) \setminus C$ (and, alternatively, to $A \setminus (B \cup C)$).

By $G - S$ we denote the graph obtained from $G$ by deleting all vertices from $S$ along with incident edges,
and by $G[S]$ we denote the graph induced by the set $S$, i.e., $G - (V(G) \setminus S)$.
By $\comp(G)$ we denote the set of connected components of $G$, given as vertex sets.

A \emph{matching} in $G$ is a set of pairwise disjoint edges.
We say that a matching $\{x_i y_i \mid i \in [k]\}$ is a \emph{semi-induced matching between $\{x_1, \ldots, x_k\}$ and $\{y_1, \ldots, y_k\}$}
if for all $i, j \in [k]$, $x_iy_j \in E(G)$ if and only if $i = j$.

For vertices $u,v$, a set $S \subseteq V(G) \setminus \{u,v\}$ is a \emph{$u$-$v$-separator} if $u$ and $v$ are in different connected components of $G - S$. We say that $S$ is a \emph{minimal $u$-$v$-separator} if it is a $u$-$v$-separator
and no proper subset of $S$ is a $u$-$v$-separator. A set $S$ is a \emph{minimal separator} if it is a minimal $u$-$v$-separator for some $u,v$.
Equivalently, $S$ is a minimal separator if there are at least two components $A,B \in \comp(G - S)$ such that $N(A) = N(B) = S$. 
Any component $A \in \comp(G - S)$ with 
$N(A) = S$ is called \emph{full to $S$}; a minimal separator has at least two full components. 

We define
\[
S_G^v = \{N(v) \cap S : v \notin S \text{ and } S \text{ is a minimal separator of } G\}.
\]
The following result of Gartland and Lokshtanov will be a crucial tool used in our argument.

\begin{lemma}[Gartland and Lokshtanov~\cite{GL20}]\label{lem:GL:Sv}
If $G$ is a $k$-creature-free graph, then for every $v \in V(G)$ it holds that $|S_G^v| \leq |V(G)|^{k+1}$.
\end{lemma}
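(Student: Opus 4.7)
The plan is to encode each minimal separator $S$ with $\zeta_G(S)\le L$ by a short tuple of vertices and then count using Lemma~\ref{lem:GL:Sv}. For each such $S$ I canonically fix (say, lex-smallest) a \emph{maximum witness} $I=I(S)\subseteq S$: an independent set of size $\ell\le L$ with $|N(v)\cap I|\le 1$ for every $v\notin S$ and $\ell$ maximal. The encoding is the pair $(I,T)$, where $T=T(S)\subseteq V(G)\setminus S$ is a set of size $O(k^2L)$ which \emph{dominates} $S$ from outside, i.e.\ every $s\in S$ has a neighbor in $T$. Given $(I,T)$ together with a guess $N_v\in S_G^v$ of $N(v)\cap S$ for each $v\in T$, the separator is uniquely reconstructed as $S=I\cup\bigcup_{v\in T}N_v$, the $\subseteq$-containment being precisely the fact that $T$ dominates $S$. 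Multiplying the number of possibilities: at most $n^L$ witnesses, at most $n^{|T|}$ sets $T$, and by Lemma~\ref{lem:GL:Sv} at most $n^{(k+1)|T|}$ tuples of neighborhood guesses, for a total of at most $n^{L+(k+2)|T|}=n^{O(k^3L)}$ reconstructed separators, as required.

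\textbf{Key lemma.} The only nontrivial ingredient is the following: \emph{for every $k$-creature-free graph $G$ and every minimal separator $S$ of $G$ with $\zeta_G(S)\le L$, there exists $T\subseteq V(G)\setminus S$ with $|T|=O(k^2L)$ that dominates $S$ from outside.} To prove it, fix two full components $C_A,C_B$ of $G-S$. Since $C_A$ is full it dominates $S$, so let $T^\star\subseteq C_A$ be an inclusion-minimal dominating subset of $S$. Minimality provides, for every $v\in T^\star$, a \emph{private} vertex $p_v\in S$ with $N(p_v)\cap T^\star=\{v\}$, and hence a semi-induced matching $\{(v,p_v):v\in T^\star\}$ of size $|T^\star|$ between $T^\star\subseteq C_A$ and $P:=\{p_v\}\subseteq S$.

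\textbf{Creature extraction.} Supposing $|T^\star|>Ck^2L$ for a sufficiently large constant $C$, the plan is to extract a $k$-creature $(A',B',X,Y)$ from the matching, contradicting $k$-creature-freeness. I aim for $X=\{p_{v_1},\ldots,p_{v_k}\}\subseteq S$, $Y=\{v_1,\ldots,v_k\}\subseteq C_A$, $A'=C_B$, and $B'$ a connected subset of $C_A\setminus N_G(X)$ meeting $N(v_i)$ for every $i$. The matching condition holds by privateness, $A'=C_B$ is connected and full (hence every $p_{v_i}$ has a neighbor in $A'$), and the anti-adjacency constraints between $A'=C_B$ and $Y\cup B'\subseteq C_A$, and between $B'$ and $A'\cup X$, are immediate from the placement together with $B'\subseteq C_A\setminus N_G(X)$. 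The only substantive task is to select the $k$ pairs so that a valid $B'$ exists. For this I prune the matching in two rounds: first, group pairs by the unique index $i(v)\in[\ell]$ to which $p_v$ is ``linked'' via the maximality of $I$ (either $p_v\sim s_{i(v)}$ in $G$, or they share a common neighbor outside $S$), losing a factor of at most $\ell\le L$; second, within a single group use pigeonhole on a bounded-size transversal of $C_A$ (so that the $N(p_{v_i})\cap C_A$ become pairwise ``spread out''), so that a connected component of $C_A\setminus N_G(X)$ can serve as $B'$ and still meet each $v_i$'s neighborhood.

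\textbf{Main obstacle.} The technical heart is the second round of pruning: removing the neighborhoods $N_G(X)\cap C_A$ while keeping a connected remainder of $C_A$ that still touches each selected $v_i$. This is where both hypotheses combine crucially: $\zeta_G(S)\le L$ bounds how densely the $p_v$'s can share neighbors in $C_A$ (otherwise one could enlarge $I$), while $k$-creature-freeness forbids richly branching neighborhood patterns among the private vertices; turning this joint control into a clean pigeonhole statement that loses only an $O(k)$ factor per round, so that an initial matching of size $\Omega(k^2L)$ suffices to survive both rounds with $k$ good pairs, is the main combinatorial task.
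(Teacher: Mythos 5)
Your proposal does not prove the statement at hand. The statement is \cref{lem:GL:Sv}: for a $k$-creature-free graph $G$ and every vertex $v$, the number of distinct traces $N(v)\cap S$ over minimal separators $S$ with $v\notin S$ is at most $|V(G)|^{k+1}$. What you wrote is instead an attempted proof of \cref{thm:main} (counting minimal separators with $\zeta_G(S)\le L$), and it invokes \cref{lem:GL:Sv} itself twice --- once in the ``proof plan'' and once in the final count of neighborhood guesses --- so as a proof of \cref{lem:GL:Sv} it is circular and contains no argument at all for why $|S_G^v|$ is polynomially bounded. (The paper itself does not prove this lemma; it imports it from Gartland and Lokshtanov, so a self-contained proof would have to argue directly from $k$-creature-freeness, roughly by showing that many minimal separators realizing pairwise distinct traces on $N(v)$ force a long semi-induced matching and hence a large creature. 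Nothing of that kind appears in your write-up.)

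Even judged as a proof of \cref{thm:main}, the argument is incomplete precisely where the difficulty lies: the ``key lemma'' asserting a dominating set $T\subseteq V(G)\setminus S$ of size $O(k^2L)$ is the whole content, and your ``main obstacle'' paragraph concedes that the pruning/pigeonhole step needed to extract a $k$-creature from a large minimal dominating set $T^\star\subseteq C_A$ is not carried out; in particular it is not clear that a connected $B'\subseteq C_A\setminus N_G(X)$ meeting all the selected neighborhoods exists after losing only a factor $O(kL)$. Note also that the paper's mechanism is genuinely different and does not produce such a one-shot dominating set from outside $S$: it inducts on $L$, at each level spending $O(k^2)$ vertices ($u$, the private neighbor $v$, and the sets $Z_A$, $Z_D$), where $v$ lies \emph{inside} $S$, and it captures $S_v$ via the trace of $v$ on an auxiliary minimal separator $S''$; the strict drop $\zeta_{G_0}(S_0)<\zeta_G(S)$ is what closes the recursion. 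So the proposal both misses its target (it assumes the very lemma to be proved) and leaves its own central combinatorial claim unproven.
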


Let us also recall the crucial definition. For a set $S \subseteq V(G)$ we define
\[
\zeta_G(S) = \max \{ |I| ~:~ I \subseteq S \text{ is an independent set and for every }  v \notin S \text{ we have } |N(v) \cap I| \leq 1 \}.
\]

\section{Proof of \cref{thm:main}}\label{sec:main}
    We prove the theorem by induction on $L$
    with the exact bound of
    $n^{L(4+(k^2+2)(k+2))}$ minimal separators.

    Note that if $S\neq\emptyset$, then $\zeta_G(S)\geq 1$, since for any $u\in S$, the set $I=\{u\}$ satisfies the required properties.
    Thus, in the base case, when $L = 0$, the only candidate for $S$ is the empty set, therefore the claim holds vacuously. Also, the claim is immediate for $n=1$, so we assume $n > 1$.
    
    Let $S$ be a minimal separator of $G$, 
    and let $A$ and $B$ be two connected components of $G - S$ that are full to $S$.
    If there is a vertex $v \in V(G) \setminus S$ such that 
    $N(v) \supseteq S$, then $S \in S_G^v$. There are at most $n^{k+2}$ such separators $S$ by \cref{lem:GL:Sv};
    we may therefore assume that no such vertex exists.
    Let $\tB$ be a minimal connected subset of $B$ that still dominates $S$, i.e.\ such that $N(\tB) \supseteq S$.
    Let $u \in \tB$ be such that $\tB \setminus \{u\}$ is still connected.
    Such a vertex $u$ can be found, for instance, as a leaf of a spanning tree of $\tB$.
    We define the following sets that will be important throughout the proof,
    see \cref{fig:ASB:overview}.
    
    \newcommand\asbheight{.3\textheight}
    \begin{figure}
        \centering
        \includegraphics[height=\asbheight]{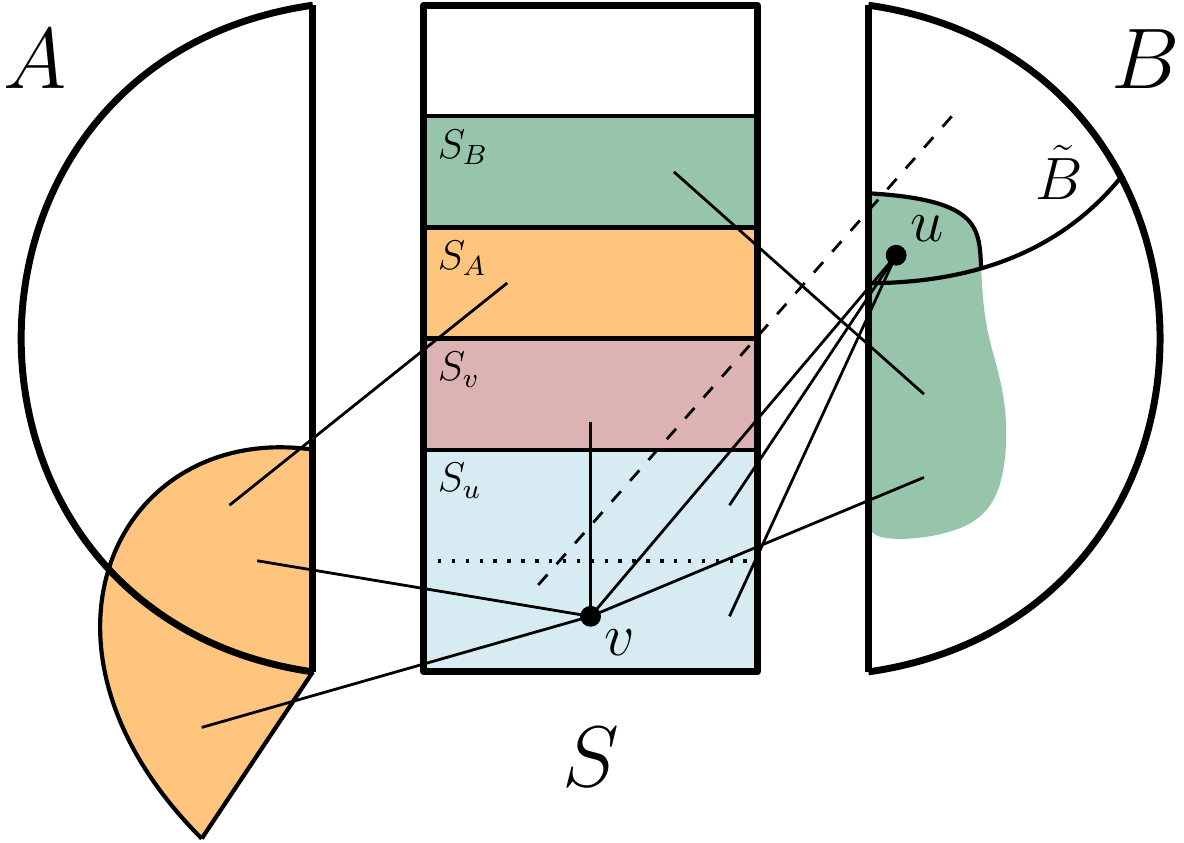}
        \caption{Subsets of $S$ defined in the proof of~\cref{thm:main}. The full lines indicate adjacencies. The dotted line inside $S_u$ indicates a partition of $S_u$ between the private neighbors of $u$ (below) and other neighbors of~$u$ (above). The dashed line indicates there is no edge between the sets.}    
        \label{fig:ASB:overview}
    \end{figure}

    \begin{itemize}
        \item We let $v \in S \cap N(u) \setminus N(\tB \setminus \{u\})$. In words, $v$ is a private neighbor (with respect to $\tB$) of $u$ in $S$. Such a vertex $v$ exists by the minimality of $\tB$.
        \item $S_u = N(u) \cap S$.
        \item $S_v = (N(v) \cap S) \setminus S_u$.
        \item $S_A = (S \setminus S_u \setminus S_v) \cap N(N(v) \setminus S \setminus B)$. That is, $S_A$ contains the vertices of $S\setminus S_u \setminus S_v$ that have a common neighbor with $v$ in $N(v)\setminus S\setminus B$. 
        \item $S_B = (S \setminus S_u \setminus S_v \setminus S_A) \cap N(N(v) \cap B)$. Similarly, $S_B$ contains the vertices of $S\setminus S_u \setminus S_v \setminus S_A$ that have a common neighbor with $v$ in $N(v)\cap B$.
    \end{itemize}
    
    Our goal is now to identify a small set that dominates $S^* = S_u \cup S_v \cup S_A \cup S_B$.
    We will repeatedly use~\cref{lem:GL:Sv} on the vertices of this set in order to bound 
    the number of choices for~$S^*$.
    We then show that we can find a minimal separator $S_0$ in $S \setminus S^*$ such that $A$ is a full component in $G - (S^* \cup S_0)$ and there is a component containing $\tB \setminus \{u\}$. 
    We will be able to show that $\zeta_{G - S^*}(S_0) < \zeta_G(S)$ which allows us to conclude using the induction hypothesis on $G-S^*$.
    
    \begin{figure}
        \centering
        \begin{subfigure}{.3\textwidth}
            \includegraphics[width=\textwidth]{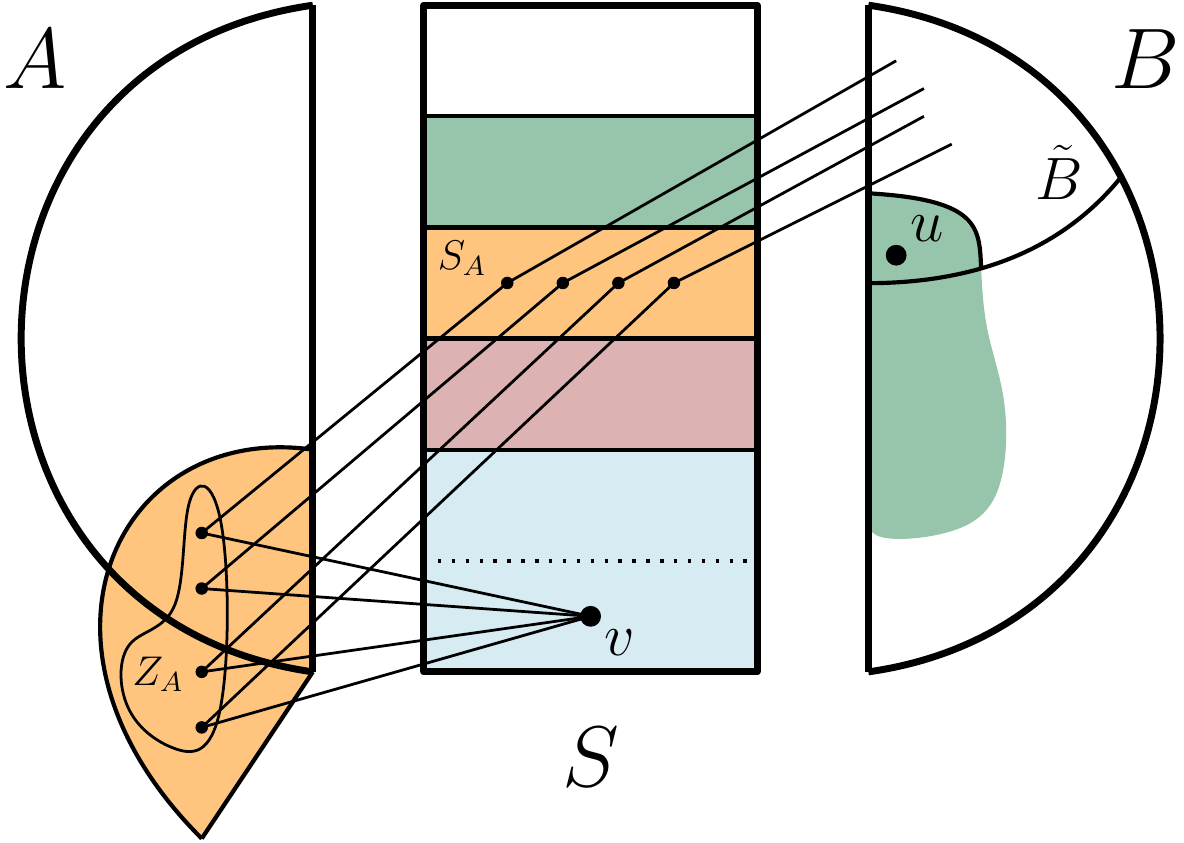}
            \caption{$|Z_A|$-creature obtained in the proof of~\cref{claim:ZA}.}
            \label{fig:cr:ZA}
        \end{subfigure}
        \hfill
        \begin{subfigure}{.3\textwidth}
            \includegraphics[width=\textwidth]{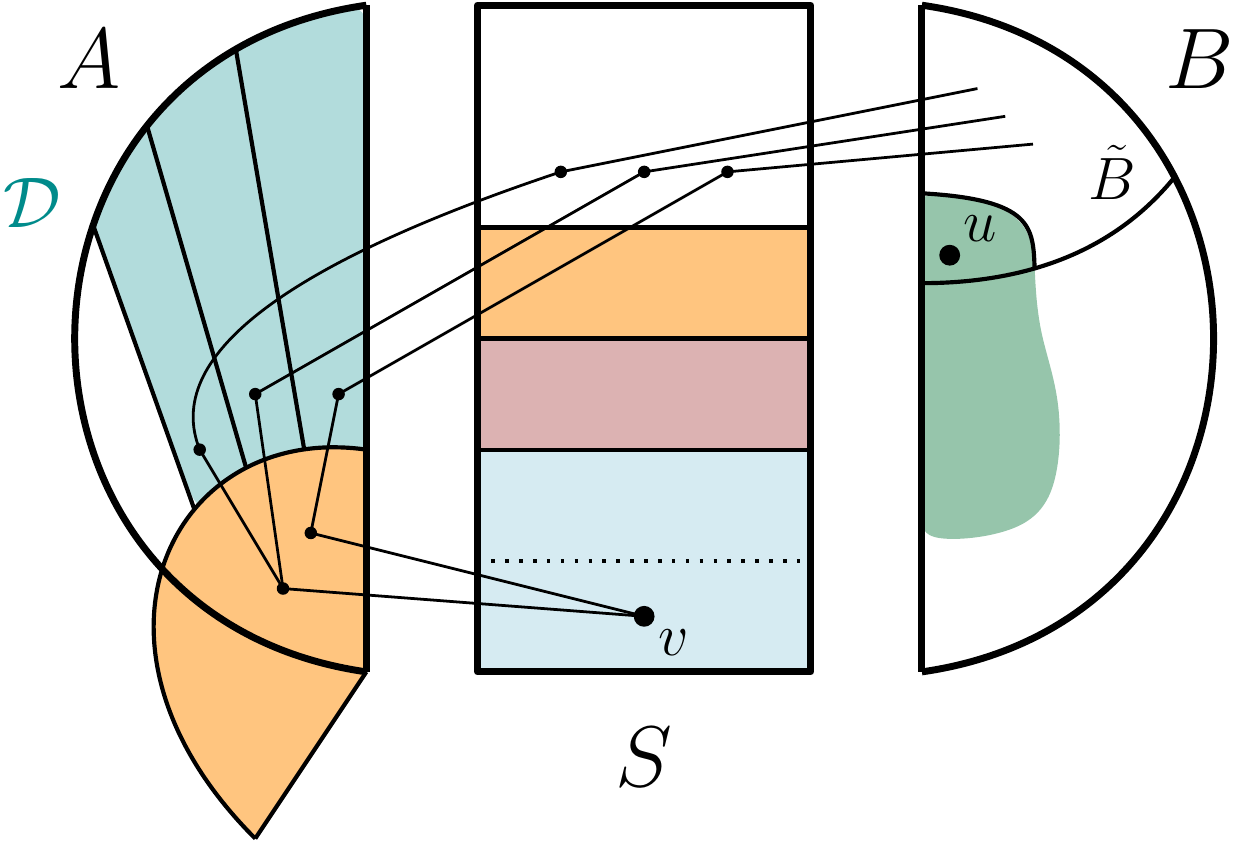}
            \caption{$|\calD|$-creature obtained in the proof of~\cref{claim:calD}.}
            \label{fig:cr:calD}
        \end{subfigure}
        \hfill
        \begin{subfigure}{.3\textwidth}
            \includegraphics[width=\textwidth]{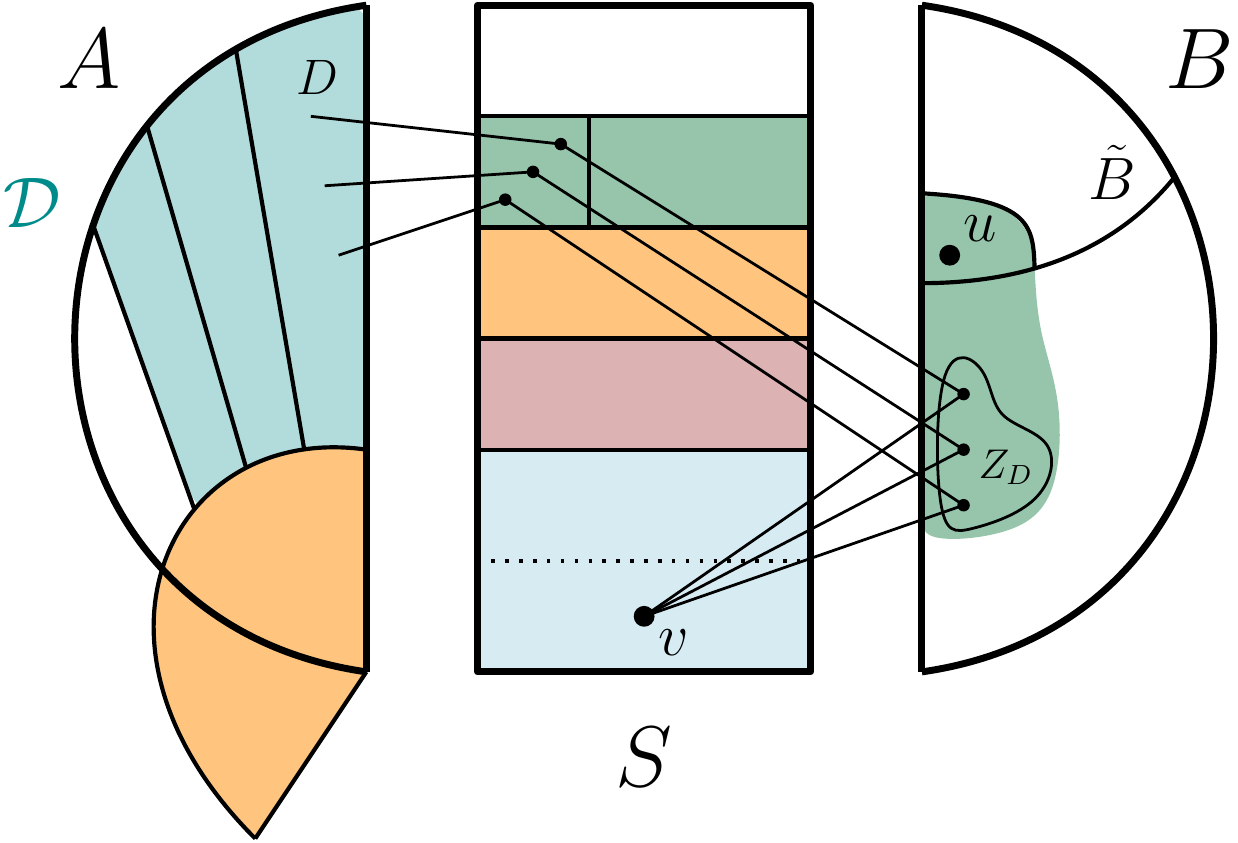}
            \caption{$|Z_D|$-creature obtained in the proof of~\cref{claim:ZD}.}
            \label{fig:cr:ZD}
        \end{subfigure}
        \caption{The creatures of~\cref{thm:main}.}
        \label{fig:creatures}
    \end{figure}
    
    \begin{claim}\label{claim:ZA}
        Let $Z_A \subseteq N(v) \setminus S \setminus B$ be a minimal set such that $N(Z_A) \supseteq S_A$.
        Then $G$ contains a $\card{Z_A}$-creature.
    \end{claim}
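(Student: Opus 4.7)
The plan is to exhibit the four sets of a $\card{Z_A}$-creature directly, using $Z_A$ itself as one side of the semi-induced matching and carefully chosen private neighbors inside $S_A$ as the other. First I would invoke the minimality of $Z_A$: for every $z \in Z_A$ the set $Z_A \setminus \{z\}$ fails to dominate $S_A$, so there exists $y_z \in S_A$ adjacent to $z$ but to no other vertex of $Z_A$. Taking $X := Z_A = \{z_1, \ldots, z_{\card{Z_A}}\}$ and $Y := \{y_{z_1}, \ldots, y_{z_{\card{Z_A}}}\}$ immediately yields the required semi-induced matching; the $y_{z_i}$ are pairwise distinct, since $y_{z_i}$ is adjacent to $z_i$ while no $y_{z_j}$ with $j \neq i$ is.

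For the two connected sides of the creature I would take $A^\star := \{v\}$ and $B^\star := \tB \setminus \{u\}$. The former is trivially connected and the latter is connected by the very choice of $u$. Every $z \in Z_A$ is adjacent to $v$ because $Z_A \subseteq N(v)$, and every $y_z \in Y$ has a neighbor in $B^\star$: since $y_z \in S_A \subseteq S \setminus S_u$ we have $y_z \notin N(u)$, yet $y_z \in S$ is dominated by $\tB$, so some vertex of $\tB \setminus \{u\}$ dominates it.

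The remaining anti-adjacency and disjointness conditions follow from three observations already baked into the construction. Vertex $v$ has no neighbor in $\tB \setminus \{u\}$ by the definition of $v$ as a private neighbor of $u$; each $y_z$ lies outside $S_u \cup S_v$, hence $y_z \notin N(v)$ (otherwise $y_z$ would land in $S_v$); and $Z_A \subseteq V(G) \setminus S \setminus B$ while $N(B^\star) \subseteq B \cup S$, so $Z_A$ is anti-adjacent to $B^\star$. Disjointness of the four sets reduces to $v \in S_u$ (so $v \notin Y$), $X \cap S = \emptyset$, and $B^\star \subseteq B$. The only subtle point is arranging for $B^\star$ to dominate $Y$ while simultaneously avoiding $v$, and this is exactly what the choice of $u$ as a ``deletable'' vertex of $\tB$ together with $v$ as its private neighbor were engineered to guarantee; once those ingredients are in place, the verification of the creature axioms is routine.
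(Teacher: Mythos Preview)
Your proof is correct and follows essentially the same approach as the paper: the creature is $(\{v\},\tB\setminus\{u\},Z_A,S_{A,Z})$, with the semi-induced matching coming from private neighbors guaranteed by the minimality of $Z_A$. Your verification is in fact slightly more detailed than the paper's (you explicitly argue why $\tB\setminus\{u\}$ dominates $Y$ via $S_A \cap S_u = \emptyset$, and you spell out disjointness), but the underlying argument is the same.
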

    \begin{claimproof}
        By the minimality of $Z_A$, each vertex of $Z_A$ has a private neighbor in $S_A$. Hence,
        there is a semi-induced matching between $Z_A$ and a size-$\card{Z_A}$ subset of $S_A$, say $S_{A, Z}$.
        We obtain the $\card{Z_A}$-creature by 
        considering the sets $(\{v\}, \tB\setminus\{u\}, Z_A, S_{A,Z})$, 
        see \cref{fig:cr:ZA}.
        Indeed, note that by our choice of $u$, we have that $G[\tB\setminus u]$ is connected;
        $v$ has no neighbors in $\tB\setminus \{u\}$, as it is a private neighbor of $u$; $v$ has no neighbors in $S_{A,Z}$ since $S_{A,Z}\subseteq S_A\subseteq S\setminus S_u\setminus S_v$; clearly there are no edges between $\tB\setminus\{u\}$ and $Z_A$ since $S$ is a separator;
$v$ dominates $Z_A$ and $\tB\setminus u$ dominates $S_{A,Z}$.      
    \end{claimproof}
    
    \begin{claim}\label{claim:calD}
        Let $\calD \subseteq \comp(G[A \setminus N(v)])$ be a minimal set of such components that dominates $S \setminus S_u \setminus S_v \setminus S_A$.
        Then $G$ contains a $\card{\calD}$-creature.
    \end{claim}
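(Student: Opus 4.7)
The approach mirrors the proof of \cref{claim:ZA}: I use the minimality of $\calD$ to extract a semi-induced matching and then build the two connected sides of the creature.

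By the minimality of $\calD$, for each $D \in \calD$ there is a vertex $s_D \in S \setminus S_u \setminus S_v \setminus S_A$ whose unique neighbor in $\bigcup_{D' \in \calD} D'$ lies in $D$. Pick $a_D \in D \cap N(s_D)$ for each $D$, and set $T := \{s_D : D \in \calD\}$. The edges $\{a_D s_D : D \in \calD\}$ form a semi-induced matching of size $\card{\calD}$ between $\{a_D\}_{D \in \calD}$ and $T$: for $D' \neq D$, the vertex $a_D \in D$ cannot neighbor $s_{D'}$ by the private-neighbor property of $s_{D'}$.

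I would then take the creature $(A', B', X, Y)$ with $A' := \tB \setminus \{u\}$, $X := T$, $Y := \{a_D : D \in \calD\}$, and $B' \subseteq A \cup \{v\}$ a connected set containing a neighbor of each $a_D$. The set $A'$ is connected by the leaf choice of $u$; each $s_D \in X$ has a neighbor in $A'$ since $\tB$ dominates $S$ and $s_D \notin S_u$; and $A' \subseteq B$ is anti-adjacent to $Y \subseteq A$ via the separator $S$ and to the vertex $v \in B'$ by the private-neighbor choice of $v$. Disjointness of all four sets follows from $A' \subseteq B$, $Y \subseteq A$, $X \subseteq S \setminus (S_u \cup S_v)$, and $B' \subseteq A \cup \{v\}$ with $v \in S_u$.

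For $B'$, the natural base is $B_0 := \{v\} \cup A_N$ with $A_N := N(v) \cap A$, which is connected through $v$ and anti-adjacent to $T$: $T \cap N(v) = \emptyset$ since $T \cap (S_u \cup S_v) = \emptyset$, and $A_N \cap N(T) = \emptyset$ because $s_D \notin S_A$ forces $s_D$ to have no neighbor in $N(v) \setminus S \setminus B \supseteq A_N$. Whenever $a_D$ has no neighbor in $A_N$, I would extend $B'$ by a path in $G[D]$ from a port of $D$ in $A_N$ to a neighbor of $a_D$, keeping the path inside $D \setminus N(s_D)$.

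The main obstacle is carrying out this extension inside each $D$ without picking up edges to $T$. By the minimality of $\calD$, the only vertex of $T$ with neighbors inside $D$ is $s_D$ itself, so the extension must avoid $N(s_D) \cap D$. I expect this to be tractable by a careful choice of $a_D$ within $D \cap N(s_D)$---analogous to choosing $u$ as a leaf of a spanning tree of $\tB$---so that $D \setminus N(s_D)$ retains both a vertex adjacent to $A_N$ and a neighbor of $a_D$. This bookkeeping is the most delicate step, and is presumably what is depicted in \cref{fig:cr:calD}.
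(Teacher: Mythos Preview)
Your overall architecture matches the paper's proof exactly: the same semi-induced matching $\{a_D s_D\}$ from the minimality of $\calD$, the same creature with one side $\tB\setminus\{u\}$ and the other side built from $\{v\}\cup(N(v)\cap A)$ together with connector paths inside each $D$. Your verification of the anti-adjacencies between $B_0=\{v\}\cup A_N$ and $T$ (via $s_D\notin S_u\cup S_v$ and $s_D\notin S_A$) is also precisely what the paper uses.

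The gap is exactly where you flag it, but your proposed fix is the wrong mechanism. The spanning-tree-leaf analogy does not apply here: you are not removing a single vertex from a connected set, you are trying to route a path in $D$ that avoids the entire set $N(s_D)\cap D$, and the structure of $D\setminus N(s_D)$ is not controlled by any leaf-type choice of $a_D$. The paper's trick is much simpler and sidesteps this bookkeeping entirely: choose $a_D$ to be a vertex of $D\cap N(s_D)$ that is \emph{closest to $A_N$ in $G[A]$}, and let $P_D$ be the internal vertices of a \emph{shortest} $a_D$--$A_N$ path in $G[A]$. All internal vertices then lie in $D$ (they are in $A\setminus N(v)$ and connected to $a_D$), and none of them can be adjacent to $s_D$, since such a vertex would belong to $D\cap N(s_D)$ and be strictly closer to $A_N$ than $a_D$, contradicting the choice of $a_D$. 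Anti-adjacency of $P_D$ to $s_{D'}$ for $D'\neq D$ is automatic from the private-neighbour property, as you already noted. With this choice the ``delicate step'' evaporates.
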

    \begin{claimproof}
        By the minimality of $\calD$, for each $D\in \calD$ there exists a vertex $y_D\in S\setminus S_u\setminus S_v\setminus S_A$ that is dominated only by vertices of $D$. Let $x_D$ be a vertex of $D$ that is adjacent to $y_D$
        and that is closest to $N(v) \cap A$ in $G[A]$. 
        Note that the edges $\{x_Dy_D~:~D\in\calD\}$ form a semi-induced matching 
		between $\{x_D~\colon~D \in \calD\}$ and $\{y_D~\colon~D \in \calD\}$ in $G$. 
        Let $P_D$ be the set of internal vertices on a shortest path between $x_D$ and $N(v) \cap A$ via $G[A]$. Note that $P_D\subset D$ and that $P_D$ is anti-adjacent to $\bigcup_{D \in \calD} \{y_D\}$. Indeed, the vertices of $P_D$ are not adjacent to $y_D$, as this would contradict the minimality of the distance between $x_D$ and $N(v)$; and for any $D'\neq D$, the vertices of $P_D$ are not adjacent to $y_{D'}$ as this vertex is only dominated by vertices of $D'$, by our choice of $y_{D'}$.
        Then we obtain a $\card{\calD}$-creature by considering the sets
        $(\{v\}\cup (N(v)\cap A) \cup (\bigcup_{D \in \calD} P_D), \tB \setminus \{u\}, \bigcup_{D \in \calD} \{x_D\}, \bigcup_{D \in \calD} \{y_D\})$,
        see \cref{fig:cr:calD}.
        Indeed, note that $G[\{v\}\cup (N(v)\cap A) \cup (\bigcup_{D \in \calD} P_D)]$ and $G[\tB \setminus \{u\}]$ are connected; there are no edges between $\{v\}\cup(N(v)\cap A)$ and $\bigcup_{D \in \calD} \{y_D\}$ since $(\bigcup_{D \in \calD} \{y_D\}) \cap (S_A\cup S_v\cup S_u)=\emptyset$, neither edges between $\bigcup_{D \in \calD} P_D$ and $\bigcup_{D \in \calD} \{y_D\}$ as mentioned above. 
         Note also that $\{v\}\cup(N(v)\cap A)\cup (\bigcup_{D \in \calD} P_D) \cup (\bigcup_{D \in \calD} \{x_D\})$ is anti-adjacent to $\tB \setminus \{u\}$ as argued in the proof of~\cref{claim:ZA}. Finally, note that $(N(v)\cap A) \cup (\bigcup_{D \in \calD} P_D)$ dominates $\bigcup_{D \in \calD} \{x_D\}$, since every $x_D$ either has a neighbor in $P_D$, or in $N(v)\cap A$ if $P_D=\emptyset$.
        Finally, it is easy to see that $\tB \setminus \{u\}$ dominates $\bigcup_{D \in \calD} \{y_D\}$. 
    \end{claimproof}
    
    \begin{claim}\label{claim:ZD}
        Let $\calD$ be as in Claim~\ref{claim:calD}.
        For each $D \in \calD$, let $Z_D \subseteq N(v) \cap B$ be a minimal set that dominates $N(D) \cap S_B$.
        Then $G$ contains a $\card{Z_D}$-creature.
    \end{claim}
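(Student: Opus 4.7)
The plan is to mirror the construction from \cref{claim:ZA}: I would propose the $|Z_D|$-creature $(\{v\}, D, Z_D, S_{B,Z})$, where $S_{B,Z} \subseteq N(D) \cap S_B$ is a size-$|Z_D|$ set of private neighbors supplied by the minimality of $Z_D$. Here $D$ plays the role that $\tB \setminus \{u\}$ played in \cref{claim:ZA}, and it is already connected by virtue of being a component of $G[A \setminus N(v)]$.

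First I would extract the semi-induced matching: by the minimality of $Z_D$, each $z \in Z_D$ admits a neighbor $s_z \in N(D) \cap S_B$ not dominated by $Z_D \setminus \{z\}$; setting $S_{B,Z} := \{s_z : z \in Z_D\}$, the edges $\{z s_z : z \in Z_D\}$ form a matching of size $|Z_D|$, and it is semi-induced precisely because of the privacy of each $s_z$.

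Next I would verify the creature conditions. The nontrivial adjacency checks are: $v$ is anti-adjacent to $S_{B,Z}$ because $N(v) \cap S \subseteq S_u \cup S_v$ while $S_{B,Z} \subseteq S \setminus S_u \setminus S_v$ by definition of $S_B$; $v$ is anti-adjacent to $D$ because $D \subseteq A \setminus N(v)$ by construction of $\calD$; and $D \subseteq A$ is anti-adjacent to $Z_D \subseteq B$ because $A$ and $B$ are distinct components of $G - S$. The domination conditions are immediate, as $Z_D \subseteq N(v)$ and $S_{B,Z} \subseteq N(D)$, and pairwise disjointness of the four sets follows from the containments in $S_u$, $A$, $B$, $S_B$ respectively (note $v \in S_u$ since $v \in N(u) \cap S$, while $S_{B,Z} \subseteq S_B$ is disjoint from $S_u$).

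I do not anticipate any substantial obstacle here: this is structurally the easiest of the three creature-building claims, since $D$ is already a connected subset of $A$ separated from $Z_D \subseteq B$ by $S$, so unlike in \cref{claim:calD} no auxiliary shortest paths need to be glued into the ``$A$-side'' of the creature, and unlike in \cref{claim:ZA} the ``$B$-side'' is handed to us directly as the component $D$ rather than requiring the $\tB \setminus \{u\}$ construction.
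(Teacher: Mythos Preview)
Your proposal is correct and matches the paper's proof essentially verbatim: the paper constructs the same $|Z_D|$-creature $(\{v\}, D, Z_D, S_{B,Z})$, extracting $S_{B,Z}$ as private neighbors from the minimality of $Z_D$, and verifies the same adjacency and domination conditions you list.
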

    \begin{claimproof}
        By the minimality of $Z_D$, there is a semi-induced matching between $Z_D$ and a size-$\card{Z_D}$ subset $S_{B, Z}$ of $N(D) \cap S_B$.
        We obtain a creature by considering the sets
        $(\{v\}, D, Z_D, S_{B,Z})$, see \cref{fig:cr:ZD}.
        Indeed,
        note that $D$ is connected by definition; 
        $v$ is not adjacent to $S_{B,Z}$ since $S_{B,Z}\cap (S_u\cup S_v)=\emptyset$ and $v$ is not adjacent to $D$ since $D$ is a connected component of $A\setminus N(v)$; $D$ is not adjacent to $Z_D$ since $S$ is a separator; $v$ dominates $Z_D$ by definition of $Z_D$ and $S_{B,Z}\subseteq N(D)$.  
    \end{claimproof}
    
    \begin{figure}
        \centering
        \includegraphics[height=\asbheight]{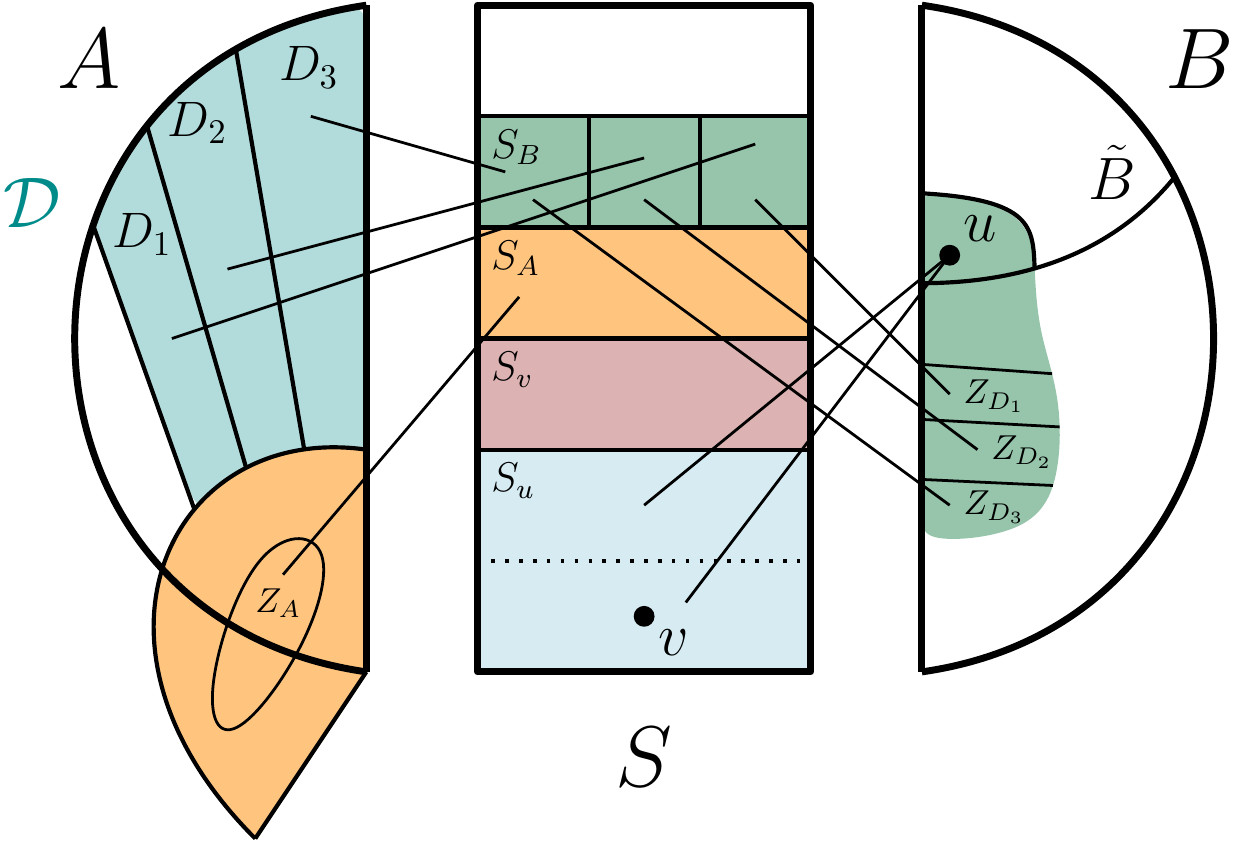}
        \caption{Illustration of how $Q$ is obtained in the proof of~\cref{thm:main}.}
        \label{fig:ASB:Q}
    \end{figure}
    
    Let $Z = \{u\} \cup Z_A \cup \bigcup_{D \in \calD} Z_D$,
    where $Z_A$, $\calD$, and $Z_D$ for $D \in \calD$
    are as defined in~\cref{claim:ZA,claim:calD,claim:ZD}, respectively.
    For all $z \in Z$, let $Q_z = N(z) \cap S$.
    Let $Q = \bigcup_{z \in Z} Q_z$.
    Note that $Q$ contains $S_u$ since $u \in Z$,
    that $Q$ contains $S_A$ since $Z_A \subseteq Z$, 
    and that $Q$ contains $S_B$.
    The latter is due to the fact that the vertices in $\calD$ dominate $S_B$ by choice, and each $Z_D$ where $D \in \calD$ dominates $N(D) \cap S_B$. 
    We illustrate this situation in \cref{fig:ASB:Q}.
    It remains to get a grip on $S_v$.
    
    To do so, let $S' = (S \setminus \{v\}) \cup (N(v) \cap B)$, 
    and note that $S'$ separates $A \cup \{v\}$ from $\tB \setminus \{u\}$.
    Let $S''$ be a minimal subset of $S'$ that still separates $A \cup \{v\}$ from $\tB \setminus \{u\}$.
    Note that there are components $A'' \supseteq A \cup \{v\}$ and $B'' \supseteq \tB \setminus \{u\}$ that are full to $S''$ and $S''$ is a minimal separator.
    Now let $R = N(v) \cap S'' \in S^v_G$, so there are at most $n^{k+2}$ choices for $R$, by~\cref{lem:GL:Sv}.
    We observe that $R \supseteq S_v$, which is due to the fact that $\tB \setminus \{u\}$ dominates $S_v$, and that $S''$ separates $\{v\}$ from $\tB \setminus \{u\}$.
    
    \begin{claim}\label{claim:choicesQR}
        There are at most $n^{k^2(k+2)}$ choices for $Q$,
        and at most $n^{k+2}$ choices for $R$.
    \end{claim}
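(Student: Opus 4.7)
The plan is a short counting argument built on two ingredients: the $k$-creature-freeness of $G$ (which caps the sizes of $Z_A$, $\calD$, and each $Z_D$) and \cref{lem:GL:Sv} (which bounds $|S^z_G|$ for each vertex $z$).

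First I would use \cref{claim:ZA,claim:calD,claim:ZD} to deduce $|Z_A| \leq k-1$, $|\calD| \leq k-1$, and $|Z_D| \leq k-1$ for every $D \in \calD$. Summing these three bounds,
\[
|Z| = 1 + |Z_A| + \sum_{D \in \calD} |Z_D| \leq 1 + (k-1) + (k-1)^2 \leq k^2.
\]

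Next I would count the possibilities for $Q$ by first specifying $Z$ and then specifying $Q_z = N(z) \cap S$ for each $z \in Z$. The set $Z$ is a subset of $V(G)$ of size at most $k^2$, so there are at most $n^{k^2}$ choices for $Z$. Moreover, every $z \in Z$ lies in $V(G) \setminus S$ (since $u \in \tB \subseteq B$, $Z_A \subseteq N(v) \setminus S \setminus B$, and each $Z_D \subseteq N(v) \cap B$), whence $Q_z \in S^z_G$; by \cref{lem:GL:Sv} there are at most $n^{k+1}$ choices for $Q_z$ once $z$ is fixed. Multiplying yields at most $n^{k^2} \cdot (n^{k+1})^{k^2} = n^{k^2(k+2)}$ choices for $Q = \bigcup_{z \in Z} Q_z$. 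The bound on $R$ is immediate from \cref{lem:GL:Sv}: we have $R \in S^v_G$ by construction, so for fixed $v$ there are at most $n^{k+1}$ choices, and a factor of $n$ for $v$ itself gives the claimed $n^{k+2}$.

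I do not anticipate any real obstacle here; all the creative content has already been absorbed into \cref{claim:ZA,claim:calD,claim:ZD}, and the remaining task is essentially bookkeeping. The only subtlety is that to describe $Q$ we must pay both a factor of $n$ per element of $Z$ (to identify which vertex $z$ we are talking about) and a factor of $n^{k+1}$ per element (to identify $Q_z$), which is how the exponent lands at $k^2(k+2)$ rather than $k^2(k+1)$.
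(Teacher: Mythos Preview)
Your argument is correct and follows the paper's proof essentially verbatim: bound $|Z|$ via \cref{claim:ZA,claim:calD,claim:ZD}, pay $n^{k^2}$ to choose $Z$, then $n^{k+1}$ per element via \cref{lem:GL:Sv} to get $n^{k^2(k+2)}$; for $R$ use $R \in S^v_G$. The only cosmetic difference is that the paper simply invokes \cref{lem:GL:Sv} (giving $n^{k+1}$, which is already at most the claimed $n^{k+2}$) rather than attributing the extra factor of $n$ to the choice of $v$; since $v$ is counted separately in the final tally, your phrasing slightly double-counts, but the stated bound holds regardless.
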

    \begin{claimproof}
        We already observed the second statement of the claim above. 
        For the first statement,
        by \cref{claim:ZA,claim:calD,claim:ZD} we know that $\card{Z} < k^2$,
        so there are at most $n^{k^2}$ choices for $Z$.
        For each $z \in Z$, $Q_z \in S^z_G$, so by \cref{lem:GL:Sv}, 
        there are at most $n^{k^2(k+1)}$ choices for each $Q_z$,
        and therefore at most $n^{k^2(k+2)}$ choices for $Q$.
    \end{claimproof}
    
    Now, let $G_0 = G - (Q \cup R)$ and $S_0 = S \setminus Q \setminus R$.
    Note that 
    $S_0 \subseteq S \setminus S_u \setminus S_v \setminus S_A \setminus S_B$.
    Moreover, $A$ is a connected component of $G_0 - S_0$,
    and there is a connected component $B_0$ of $G_0 - S_0$ that contains $\tB \setminus \{u\}$. We conclude that $S_0$ is a minimal separator of $G_0$, with $A$ and $B_0$ being connected components of $G_0 - S_0$ that are full to $S_0$.
    We now show that we can use the induction hypothesis to bound the number of choices for $S_0$.
    
    \begin{claim}\label{claim:zeta}
        $\zeta_{G_0}(S_0) < \zeta_G(S)$.
    \end{claim}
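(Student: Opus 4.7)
The plan is to prove $\zeta_{G_0}(S_0) < \zeta_G(S)$ by exhibiting an explicit witness. Let $I \subseteq S_0$ be a maximum independent set satisfying $|N_{G_0}(w) \cap I| \leq 1$ for every $w \in V(G_0) \setminus S_0$, so $|I| = \zeta_{G_0}(S_0)$. I will show that $I' := I \cup \{v\}$ is an independent subset of $S$ of size $|I|+1$ that satisfies $|N_G(w) \cap I'| \leq 1$ for every $w \in V(G) \setminus S$, which yields $\zeta_G(S) \geq |I|+1$.

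The easy checks first: $v \in S$ by its choice as a private neighbor of $u$, and $I \subseteq S_0 \subseteq S$, so $I' \subseteq S$; moreover $v \in S_u$ while $I \subseteq S_0 \subseteq S \setminus S_u$, so $v \notin I$ and $|I'|=|I|+1$. Independence of $I'$ in $G$ reduces to checking that $v$ has no neighbor in $I$; the neighbors of $v$ inside $S$ are contained in $S_u \cup S_v$, directly from $S_v = (N(v) \cap S) \setminus S_u$, whereas $I \subseteq S_0$ is disjoint from $S_u \cup S_v \cup S_A \cup S_B$.

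The main step is the bound $|N_G(w) \cap I'| \leq 1$ for every $w \in V(G) \setminus S$, which I split according to whether $w$ is adjacent to $v$. If $w \in N(v)$, then $w$ lies in either $N(v) \cap B$ or $N(v) \setminus S \setminus B$, so any vertex of $S \setminus S_u \setminus S_v$ adjacent to $w$ belongs respectively to $S_B$ or $S_A$ by definition; since $I$ avoids $S_A \cup S_B$, no vertex of $I$ is adjacent to $w$, and $|N_G(w) \cap I'| = 1$. If $w \notin N(v)$, then $w \notin R$, since $R \subseteq (S \setminus \{v\}) \cup (N(v) \cap B)$ and $w$ lies in neither piece of this union; also $w \notin Q \subseteq S$; hence $w \in V(G_0) \setminus S_0$, so the hypothesis on $I$ gives $|N_G(w) \cap I| = |N_{G_0}(w) \cap I| \leq 1$, and non-adjacency to $v$ extends this to $I'$.

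The principal subtlety is confirming that $S_A \cup S_B$ captures \emph{every} $(S \setminus S_u \setminus S_v)$-vertex sharing an external co-neighbor with $v$, which is precisely why the definitions split the exterior of $v$ along the component $B$; once this is internalized, the rest of the verification is essentially forced by how $Q$ and $R$ were chosen to dominate $S_u \cup S_A \cup S_B$ and $S_v$, respectively.
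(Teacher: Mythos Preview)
Your proof is correct and follows essentially the same approach as the paper: take a witness $I$ for $\zeta_{G_0}(S_0)$, adjoin $v$, and verify that $I\cup\{v\}$ witnesses $\zeta_G(S)\geq |I|+1$ by splitting on whether $w\in N(v)$. The only cosmetic differences are that the paper handles the case $w\notin N(v)$ by contradiction (deriving $y\in R\setminus S\subseteq N(v)\cap B$), whereas you argue directly that $w\notin Q\cup R$; and your ``respectively $S_B$ or $S_A$'' is slightly loose (a neighbor of $w\in N(v)\cap B$ in $S\setminus S_u\setminus S_v$ could land in $S_A$), but since $I$ avoids $S_A\cup S_B$ the conclusion is unaffected.
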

    \begin{claimproof}
       Let $I_0 \subseteq S_0$ be an independent set such that for all $y \in V(G_0) \setminus S_0$, $\card{N_{G_0}(y) \cap I_0} \le 1$.
        Let $I = I_0 \cup \{v\}$; $I$ is still an independent set since $S_0 \subseteq S \setminus N_G(v)$.
        We argue that for all $y \in V(G) \setminus S$, $\card{N_G(y) \cap I} \le 1$.
        Suppose that $y \in N_G(v)$. Since $S_0 \cap (S_A \cup S_B) = \emptyset$, we have that $N_G(y) \cap S_0 = \emptyset$ and therefore $|N_G(y)\cap I|=1$.
        We may now assume that $y \notin N_G(v)$.
        Suppose that $\card{N_G(y) \cap I} > 1$.
        Since $y \notin N_G(v)$, we conclude that $y \notin V(G_0) \setminus S_0$, otherwise $y$ would have at least two neighbors in $I_0$, a contradiction with the choice of $I_0$ in~$S_0$ in the graph $G_0$.
        This means that $y \in R \setminus S$,
        and therefore $y \in N_G(v) \cap B$,
        which is a contradiction with our assumption that $y\notin N_G(v)$.
    \end{claimproof}
    
    The number of choices for $u$, $v$, $Q$, and $R$ is at most $n^{2+(k^2+1)(k+2)}$, see \cref{claim:choicesQR}.
    For $S_0$, there are at most $n^{(L-1)(4+(k^2+2)(k+2))}$ choices by \cref{claim:zeta} and the induction hypothesis.
    Given $Q$, $R$, and $S_0$, there are at most $n$ choices for
    $A \in \comp(G - Q - R - S_0)$ and we obtain $S$ as $N(A)$.
    Taking into account also at most $n^{k+2}$ separators $S$ for which there
    exists $v \in V(G) \setminus S$ with $S \subseteq N(v)$, the number of separators
    of $G$ is bounded by

    \begin{align*}
    &n^{2+(k^2+1)(k+2)} \cdot n^{(L-1)(4+(k^2+2)(k+2))} \cdot n + n^{k+2} \\
      &\qquad\leq 
      n^{4+(k^2+1)(k+2)} \cdot n^{(L-1)(4+(k^2+2)(k+2))} \leq n^{L(4+(k^2+2)(k+2))}.
      \end{align*}

This completes the proof.

\section{Wrapping up the proof of \cref{thm:conj}}\label{sec:wrap-up}
To conclude the proof of \cref{thm:conj}, we observe that the following
statement essentially follows from the combinations
of Lemma~9 and the proof of Lemma~15 of~\cite{GL20}.

\begin{lemma}[\cite{GL20}]\label{lem:GL20}
If $G$ is a $k$-creature-free graph that contains a minimal separator $S$
with $\zeta_G(S) > (8k^2)^{k+2}$, then $G$ contains a $k$-skinny-ladder as an induced minor.
\end{lemma}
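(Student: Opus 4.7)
The plan is to use the independent set $I \subseteq S$ witnessing $\zeta_G(S) > (8k^2)^{k+2}$ directly as a pool of candidate rungs of the skinny ladder, and then to exploit $k$-creature-freeness to extract the two path-spines from the two full components of $G - S$. Fix $A, B \in \comp(G - S)$ that are full to $S$, and let $I = \{r_1, \ldots, r_N\} \subseteq S$ with $N > (8k^2)^{k+2}$. By fullness of $A$ and $B$, each $r_i$ has at least one neighbor in $A$ and one in $B$; pick arbitrarily such anchors $a_i \in N(r_i) \cap A$ and $b_i \in N(r_i) \cap B$. The $\zeta$-property immediately implies $a_i \neq a_j$ and $b_i \neq b_j$ whenever $i \neq j$ (otherwise a vertex outside $S$ would have two neighbors in $I$), so we have two size-$N$ families of distinct anchors matched to distinct rungs.

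Next, using connectivity of $G[A]$ (respectively $G[B]$) I would build a Steiner tree $T_A$ (respectively $T_B$) spanning $\{a_i : i \in [N]\}$ (respectively $\{b_i : i \in [N]\}$). In the lucky case where each tree contracts to a path whose anchors appear in a common linear order, and no spurious edge $a_i r_j$, $b_i r_j$ with $i \neq j$ or cross-edge between the two trees exists, contracting each interval of the spines and keeping the rungs as singletons yields the required $k$-skinny-ladder as induced minor. All the work lies in showing that, after sufficient pruning, this lucky situation survives on at least $k$ indices.

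The pruning is iterative, and the exponent $k+2$ in the bound strongly suggests a layered pigeonhole losing a factor of $8k^2$ per round. A typical round has the form: if any of the obstructions (a branch of $T_A$ or $T_B$ of degree greater than $k$ leading to distinct anchors, an unexpected edge $a_i r_j$ or $b_i r_j$ with $i \neq j$, or an unrouted edge between $T_A$ and $T_B$) survives on many indices, then one can carve out $k$ pairwise far-apart sub-branches together with their anchor-rung pairs and package them into a $k$-creature. The two Steiner trees (or appropriate subtrees thereof) play the roles of the connected sets $A$ and $B$ in the creature definition, while carefully chosen anchors and rungs supply the semi-induced matching between the sets $X$ and $Y$. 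Since $G$ is $k$-creature-free, the offending indices must be few, and one discards them at a cost of only a factor $8k^2$ in $|I|$.

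The main obstacle is the simultaneous linearization of $T_A$ and $T_B$: even after each tree individually becomes (essentially) a path with anchors distributed linearly, nothing a priori forces the two orders on the rungs to coincide. This is the delicate combinatorial step — presumably handled by the proof of Lemma~15 of~\cite{GL20} together with its Lemma~9 — where one applies an Erd\H{o}s--Szekeres / Ramsey-type extraction: a long incompatibility between the two orderings forces a long interleaved pattern that once again can be packaged as a $k$-creature (using one ordering along $T_A$ and its inverse along $T_B$ to supply anti-adjacencies between the two "sides"). Iterating the pruning over the $k+2$ types of obstructions, starting from $N > (8k^2)^{k+2}$, leaves at least $k$ indices on which the configuration is clean, producing the desired $k$-skinny-ladder as an induced minor.
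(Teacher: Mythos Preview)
Your plan is in the right spirit, but it is both less direct than the paper's route and leaves the hardest step genuinely unresolved.

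The paper's argument is itself only a sketch, but a precise one: it invokes Lemma~9 of~\cite{GL20}, which at the cost of a single factor $k^2$ in $|I_0|$ already produces \emph{induced paths} $L \subseteq A$ and $R \subseteq B$ that both dominate the surviving set $I$. This completely bypasses your Steiner-tree construction and the subsequent tree-linearization; paths, not trees, are the correct intermediate object from the start. Having obtained the two paths, the paper simply observes that one is now exactly in the situation reached after the first paragraph of the proof of Lemma~15 of~\cite{GL20}, and that the remainder of that proof (together with Lemmata~8, 13 and~14 of~\cite{GL20}) applies verbatim to extract the $k$-skinny-ladder as an induced minor.

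Your real gap is the order-synchronization step. You assert that a long mismatch between the two anchor orders along $T_A$ and $T_B$ can be ``packaged as a $k$-creature'' via an Erd\H{o}s--Szekeres-type extraction, but this is far from obvious: a creature requires the two connected sides to be mutually anti-adjacent and each to dominate its own half of a semi-induced matching, and a mere inversion of two linear orders does not by itself supply that structure. This is precisely the nontrivial content of Lemmata~13 and~14 of~\cite{GL20}, which you defer to with the word ``presumably''. Your accounting (why a factor $8k^2$ per round, why exactly $k+2$ rounds) is likewise not justified; in the paper's route the only loss before entering Lemma~15 is the single factor $k^2$ from Lemma~9, and the remaining $(8k^2)^{k+1}$ is exactly what the machinery of Lemma~15 consumes. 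The cleanest fix is to follow the paper: apply Lemma~9 first to obtain the two dominating induced paths, then cite the remainder of Lemma~15 as a black box rather than attempting to reconstruct it.
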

\begin{proof}[sketch.]
Let $G$ and $S$ be as in the lemma statement.
Let $I_0 \subseteq S$ be an independent set of size $\zeta_G(S)$ such that
no vertex $v \in V(G) \setminus S$ is adjacent to more than one vertex of $I_0$. 

Let $L_0$ and $R_0$ be two full sides of $S$. 
Lemma~9 of~\cite{GL20} asserts that there exists an induced path $L$ in $L_0$,
an induced path $R$ in $R_0$, and a set $I \subseteq I_0$ of size at least $|I_0|/k^2 > (8k^2)^{k+1}$
such that $L$ dominates $I$ and $R$ dominates $I$.

This is exactly the situation at the end of the first paragraph of the proof of Lemma~15
of~\cite{GL20}. A careful inspection of that proof shows that the remainder of the proof
(as well as the invoked Lemmata~8, 13 and~14)
do not use other assumptions of Lemma~15. 
Hence, we obtain the conclusion: a $k$-skinny ladder as an induced minor of $G$.
\end{proof}

By combining \cref{thm:main} and \cref{lem:GL20}, we obtain \cref{thm:conj}.

\section{Conclusion}
In \cref{thm:conj} we showed that if a graph class $\mathcal{G}$ excludes $k$-creatures as induced subgraphs 
and $k$-skinny ladders as induced minors, then $\mathcal{G}$ is tame.
However, note that while $k$-creatures have exponential (in $k$) number of minimal separators, this is not the case for $k$-skinny ladders: the class of $k$-skinny ladders (over all $k$) is tame.
Thus the implication reverse to the one in \cref{thm:conj} does not hold.

Observe that the full tame/feral dichotomy for arbitrary hereditary graph classes is simply
false due to some very obscure examples.
Let $H_k$ be the $(k,2^k+1)$-theta graph: $k$ paths of length $2^k+1$ with common endpoints. 
Note that $H_k$ has $2^{k^2} + 2^{\Oh(k)}$
minimal separators ($2^{k^2}$ of them choose one internal vertex on each path) and
$k2^k +2$ vertices, so the number of minimal separators of $H_k$ is around
$|V(H_k)|^{\log |V(H_k)|}$. Hence, the hereditary class of all induced subgraphs
of all graphs $H_k$ for $k \in \mathbb{N}$ is neither tame nor feral. 

However, it is still interesting to try to obtain a tighter classification between tame
and feral graph classes for some more ``well-behaved'' hereditary graph classes.
As discussed in Conjecture~4 of~\cite{GL20}, 
a good restriction that excludes artificial examples as in the previous paragraph
is to focus on induced-minor-closed graph classes.

\bibliographystyle{abbrv}
\bibliography{references}

\begin{thebibliography}{1}

\bibitem{AbrishamiCDTTV22}
T.~Abrishami, M.~Chudnovsky, C.~Dibek, S.~Thomass{\'{e}}, N.~Trotignon, and
  K.~Vu\v{s}kovi\v{c}.
\newblock Graphs with polynomially many minimal separators.
\newblock {\em J. Comb. Theory, Ser. {B}}, 152:248--280, 2022.

\bibitem{BouchitteT01}
V.~Bouchitt{\'{e}} and I.~Todinca.
\newblock Treewidth and minimum fill-in: Grouping the minimal separators.
\newblock {\em {SIAM} J. Comput.}, 31(1):212--232, 2001.

\bibitem{FominTV15}
F.~V. Fomin, I.~Todinca, and Y.~Villanger.
\newblock Large induced subgraphs via triangulations and {CMSO}.
\newblock {\em {SIAM} J. Comput.}, 44(1):54--87, 2015.

\bibitem{GL20}
P.~Gartland and D.~Lokshtanov.
\newblock Dominated minimal separators are tame (nearly all others are feral).
\newblock {\em CoRR}, abs/2007.08761, 2020.

\bibitem{MilanicP19}
M.~Milani\v{c} and N.~Piva\v{c}.
\newblock Minimal separators in graph classes defined by small forbidden
  induced subgraphs.
\newblock In I.~Sau and D.~M. Thilikos, editors, {\em Graph-Theoretic Concepts
  in Computer Science - 45th International Workshop, {WG} 2019, Vall de
  N{\'{u}}ria, Spain, June 19-21, 2019, Revised Papers}, volume 11789 of {\em
  Lecture Notes in Computer Science}, pages 379--391. Springer, 2019.

\bibitem{MilanicP21}
M.~Milani\v{c} and N.~Piva\v{c}.
\newblock Polynomially bounding the number of minimal separators in graphs:
  Reductions, sufficient conditions, and a dichotomy theorem.
\newblock {\em Electron. J. Comb.}, 28(1):P1.41, 2021.

\end{thebibliography}
\end{document}